\documentclass[onecolumn,11pt,draft=false]{IEEEtran}
\usepackage{mathtools,color}
\usepackage[hidelinks]{hyperref}
\usepackage{booktabs}
\usepackage{enumitem}
\usepackage{balance}
\usepackage{setspace}
\usepackage{latexsym}
\usepackage{lipsum}
\usepackage{amsfonts}
\usepackage{amssymb,amsmath,bbm}
\usepackage{cleveref}
\usepackage{tikz}
\usetikzlibrary{fit,positioning}
\newtheorem{example}{Example}[section]
\newtheorem{remark}{Remark}
\newtheorem{lemma}{Lemma}
\newtheorem{theorem}{Theorem}
\newtheorem{conjecture}{Conjecture}

\newtheorem{corollary}{Corollary}

\newcommand{\oprocendsymbol}{\hbox{$\bullet$}}
\newcommand{\oprocend}{\relax\ifmmode\else\unskip\hfill\fi\oprocendsymbol}
\allowdisplaybreaks

\begin{document}
\title{Opinion Dynamics with Memory Loss and Communication Delays}

 \author{Somya Singh,
         Sharayu Moharir and
         Neeraja Saharsrabudhe}
  \renewcommand{\thefootnote}{\fnsymbol{footnote}}
  \footnotetext{\\ S. Singh is with Department of Mathematics, Shiv Nadar Institution of Eminence, Delhi-NCR, India, 201314. Email: somya.singh@snu.edu.in\\
  S. Moharir is with Department of Electrical Engineering, Indian Institute of Technology, Bombay, India, 400076.\\
  N. Sahasrabudhe is with Department of Mathematics, Indian Institute of
Science Education and Research, India, 140306.
  }
\maketitle

% \maketitle
% % Optional PDF information
% \ifpdf
% \hypersetup{
%   pdftitle={A Finite Memory interacting P\'{o}lya Contagion Network and Approximating Dynamical Systems},
%   pdfauthor={Somya Singh, Fady Alajaji, Bahman Gharesifard}
% }
% \fi

% The next statement enables references to information in the
% supplement. See the xr-hyperref package for details.

% FundRef data to be entered by SIAM
%<funding-group specific-use="FundRef">
%<award-group>
%<funding-source>
%<named-content content-type="funder-name"> 
%</named-content> 
%<named-content content-type="funder-identifier"> 
%</named-content>
%</funding-source>
%<award-id> </award-id>
%</award-group>
%</funding-group>

% REQUIRED
\begin{abstract}
We propose a novel framework for modeling binary opinions (0 or 1) of individuals connected through a weighted directed network, where edge weights quantify interpersonal influence. Unlike classical models that assume complete access to previously expressed opinions, our framework allows individuals to update their biases using structured memory sets that capture limited and delayed information exchange. To analyze these opinion differences, we introduce a mathematically tractable notion of relative bias between pairs of individuals. The relative biases evolve according to a linear update rule involving past expressed opinions specified by the memory sets. We define the belief of an individual as the probability of expressing opinion 1 and derive a time-delayed dynamical system governing the evolution of network beliefs. We establish its asymptotic behavior and characterize its properties. The framework is further extended to networks containing bots, which maintain fixed biases while influencing neighboring individuals. We quantify the effect of bots by comparing the fixed points of the dynamics in their presence and absence. Finally, simulations illustrate the influence of memory, network structure, and bot interactions on the resulting opinion dynamics.
\end{abstract}

% REQUIRED
\begin{IEEEkeywords}
 Opinion dynamics, network-based interactions, time-delayed dynamical systems, hub-and-spoke model, bot influence.
\end{IEEEkeywords}

\begin{spacing}{1.59}

\section{Introduction}\label{sec:introduction}
Opinion formation in modern networked systems rarely
proceeds under complete information. On online social platforms, an
individual observes only a limited and delayed window of the opinions
expressed by their contacts: content feeds surface a small subset of
recent posts, attention is selective, and different links deliver
different amounts of history. In engineered networks that exchange
binary signals---content-approval votes, congestion indicators,
reputation flags---nodes similarly store only finitely many recent
messages per link, with buffer sizes and delays that vary across links.
At the same time, these networks are increasingly populated by
automated accounts (\emph{bots}) that do not update their opinions but
persistently inject a fixed bias into their neighborhoods, with
documented effects on public-health communication
\cite{JA-EF:18,HL-XM-YZ-MC:23}, political discussion \cite{SH-PR-RM:19}, and
platform ecosystems at large \cite{LN-KC:25}. Understanding how limited,
delayed information exchange shapes collective beliefs---and when a
bot, or a coalition of bots, can measurably steer them---is therefore a
question of both network science and network engineering: it bears
directly on the prediction and control of behavior over information
networks, and on the design of interventions that detect or neutralize
coordinated influence.

Classical opinion dynamics models are not well suited to this regime.
In averaging-based models such as DeGroot \cite{MHD:74} and its
descendants \cite{NEF-ECJ:90,AVP-RT:17,AVP-RT:18}, in voter-type models
\cite{TL:97,TL:99}, and in bounded-confidence models
\cite{RH-UK:02,GD-DN-FA-GW:00}, agents update using the \emph{current}
opinions of their neighbors; Bayesian frameworks
\cite{DA-MD-IL-AO:11,AB-DF:04,KRV:14}, at the other extreme,
typically endow agents with the \emph{entire} history of observations.
Neither captures the intermediate---and practically prevalent---regime
in which each ordered pair of agents shares a finite, link-dependent,
possibly delayed window of past expressed opinions.

In this paper, we propose an opinion dynamics framework built on
\emph{pairwise memory sets}: for each ordered pair $(i,j)$ of agents,
a finitely recent, time-homogeneous set $\mathcal{M}^{(t)}_{ji}$
specifies which past expressed opinions of $j$ are available to $i$
when $i$ updates its disposition toward $j$. This construction
simultaneously models communication delays (limited access to others'
histories) and memory loss (forgetting one's own history), and it
strictly generalizes the finite-memory urn networks of
\cite{SS-FA-BG:22_consensus,SS-FA-BG:22}, which correspond to the special case of
a common memory set. Agents interact over a weighted directed graph
with a row-stochastic interaction matrix, express binary opinions
probabilistically, and update pairwise \emph{relative biases} linearly
from the remembered opinions. The resulting belief dynamics form a
time-delayed discrete-time linear system over the network, whose delay
structure is inherited directly from the memory sets. We then extend
the framework to networks containing bots---agents with fixed bias
(\emph{strength}) $\eta \in (0,1]$ that influence neighbors but never
update---and ask two control-theoretic questions: \emph{when can a bot
shift the asymptotic beliefs of the network toward its own opinion?}
and \emph{when do the influences of multiple competing bots cancel,
leaving the network asymptotically unperturbed?}

\subsection{Contributions}
The main contributions of this paper are as follows.
\begin{enumerate}
    \item \textbf{A pairwise memory-set framework for opinion
    dynamics.} We introduce finitely recent, time-homogeneous memory
    sets indexed by ordered pairs of agents, capturing heterogeneous
    per-link communication delays and memory loss within a single
    tractable model (Section~\ref{sec:model}). The framework admits a
    micro-foundation as an interacting network of finite-memory
    Friedman urns, and recovers existing finite-memory urn networks
    \cite{SS-FA-BG:22_consensus,SS-FA-BG:22} as special cases.
    \item \textbf{Belief dynamics and asymptotics.} We derive the
    induced time-delayed discrete-time dynamical system governing
    network beliefs and establish its asymptotic behavior. For the
    homogeneous case we obtain the fixed point in closed form and show that
    it is independent of the network topology; for the general
    non-homogeneous case we prove existence and uniqueness of the
    fixed point and characterize its dependence on the interaction
    matrix (Section~\ref{sec:model} and \ref{sec:Analysis}).
    \item \textbf{A sharp threshold for single-bot influence.} For a
    network equipped with one bot of strength $\eta_B$, we prove that
    the bot strictly shifts every agent's asymptotic belief toward
    opinion~$1$ if and only if $\eta_B$ exceeds the bot-free homogeneous network
    fixed point $p^{(*)}$, irrespective of the network structure
    (Theorem~\ref{thm:lower_bound_one_bot}). The threshold thus provides a
    topology-independent, quantitative criterion for when a single
    automated account can measurably steer a network.
    \item \textbf{Cancellation conditions for competing bots.} For an
    arbitrary set of bots, we show that their net effect on the
    network vanishes if and only if the bot-influence matrix
    annihilates the vector of centered bot strengths
    (Theorem~\ref{thm:multi_bots}); consequently, non-trivial
    cancellation requires a rank-deficient bot-influence matrix and at
    least one bot on each side of the threshold $p^{(*)}$
    (Corollary~\ref{cor:cancellation}). Beyond its descriptive value,
    this condition suggests a principled recipe for \emph{counter-bot}
    design: neutralizing a detected influence campaign by deploying
    agents whose influence profiles and strengths place the combined
    system in the cancellation set.
    \item \textbf{Bot placement on structured topologies.} For
    hub-and-spoke networks we compute fixed points in closed form
    under two attachment strategies (bot-to-hub versus bot-to-all) and
    show that their relative effectiveness reverses exactly at
    $\eta_B = p^{(*)}$, quantifying how topology mediates the
    \emph{magnitude}---though not the direction---of bot influence
    (Section~\ref{sec:bots}).
    \item \textbf{Numerical validation.} Simulations on general
    directed networks and on hub-and-spoke topologies illustrate the
    convergence of beliefs, the monotone effect of memory size, and
    the predicted bot-induced shifts, including the
    topology-dependence of their magnitude
    (Section~\ref{sec:simulations}).
\end{enumerate}

An implication of the memory-set structure is that the
expressed-opinion process is a finite-order Markov chain, so the
belief dynamics can be analyzed with classical tools while retaining
the delayed linear-system structure familiar from the networked
control literature \cite{EB:06,HH-JLM:26,JP-TL-YL-JC:19}. We emphasize,
however, that the delayed-system machinery is a means to an end: our
focus throughout is on what the analysis reveals about opinion
formation, influence, and its control over networks.

The remainder of the paper is organized as follows.
Section~\ref{sec:related} surveys related work and positions our
contributions. Section~\ref{sec:preliminaries} lists all the notation used throughout the paper.
Section~\ref{sec:model} introduces the memory-set model and derives the
belief dynamics. The asymptotic properties of the belief dynamics are established in Section~\ref{sec:Analysis}.
Section~\ref{sec:bots} extends the model to networks with bots and develops the influence and cancellation results.
Section~\ref{sec:simulations} presents simulations, and
Section~\ref{sec:conclusions} concludes with future directions.

\section{Related Work}
\label{sec:related} 
We organize the discussion along three categories on which our model
differs from prior work: (i) memory and delay structures in opinion
dynamics, (ii) stubborn agents and opinion control, and (iii) bots and
misinformation in online networks. Table~\ref{tab:comparison}
summarizes the comparison with the most closely related models.

\subsection{Memory and Delays in Opinion Dynamics}
Most graph-based non-Bayesian models update opinions from the current
state of the network. In the DeGroot model \cite{MHD:74} and the
Friedkin--Johnsen model \cite{NEF-ECJ:90}, each agent averages its
neighbors' current opinions (in the latter, anchored to its initial
opinion); voter-type models \cite{TL:97,TL:99} copy a
neighbor's current opinion; bounded-confidence models
\cite{RH-UK:02,GD-DN-FA-GW:00,WQ-GC-AS:14,LT-YW-MP-JL:26} restrict
interaction to agents whose current opinions are sufficiently close.
Bayesian models \cite{DA-MD-IL-AO:11,AB-DF:04,KRV:14} instead
condition on entire observation histories. Memory structures
interpolating between these extremes have been studied through
finite-memory P\'olya urns \cite{FA-TF:94}, and through interacting
networks of such urns for consensus \cite{SS-FA-BG:22_consensus} and contagion
\cite{SS-FA-BG:22}; in these works, however, all agents share a common
memory set. Our framework generalizes this structure by indexing memory
sets by \emph{ordered pairs} of agents, so that different links carry
different, possibly delayed, windows of past opinions---the natural
abstraction of heterogeneous buffers, link qualities, and selective
attention. The induced belief dynamics form a time-delayed
discrete-time linear system, connecting our analysis to stability
theory for delayed systems \cite{EB:06,HH-JLM:26,JP-TL-YL-JC:19}; in
contrast to that literature, the delay pattern here is not exogenous
but generated by the memory sets, and our interest is in the
network-level fixed points and their perturbation by bots rather than
in stabilization per se.

\subsection{Stubborn Agents and Opinion Control}
A substantial literature studies agents that resist updating. Ghaderi
and Srikant \cite{JG-RS:12} endow each agent with a stubbornness
parameter $\alpha_i \in [0,1]$ weighting its initial opinion; fully
stubborn agents ($\alpha_i = 1$) also appear in voter-type models
\cite{EY-AO-DA-AS-AS:13} and in the ``radical groups'' and ``charismatic
leaders'' of Hegselmann and Krause \cite{RH-UK:02}. A related
thread treats opinions as a resource to be optimized, from influence
maximization \cite{DK-JK-ET:03} to opinion maximization and
susceptibility-based intervention
\cite{NP-DB-MC:22,AG-ET-PT:13,RA-JK-DP-CT:18}. Our bots are categorically
distinct from ordinary agents rather than endpoints of a stubbornness
continuum, carry a strength $\eta_k \in (0,1]$ toward a binary
opinion, and act through the interaction matrix under per-link memory.
Two results, to our knowledge, have no direct analogue in this
literature: the sharp, topology-independent threshold
$\eta_B > p^{(*)}$ for a single bot to shift the network
(Theorem~\ref{thm:lower_bound_one_bot}), and the exact algebraic cancellation
condition for multiple competing bots
(Theorem~\ref{thm:multi_bots}), which yields both an impossibility
result (full-rank bot-influence matrices admit only trivial
cancellation) and a constructive criterion for counter-bot placement.

\subsection{Bots and Misinformation in Online Networks}
Empirical and simulation studies document the influence of automated
accounts on public health communication \cite{JA-EF:18,HL-XM-YZ-MC:23},
political discussion \cite{SH-PR-RM:19}, and platform behavior at scale
\cite{LN-KC:25}, as well as indirect bot influence mediated by recommender
systems \cite{NP-DB-MC:22} and the spread of (mis)information over
social networks \cite{DA-AO-AP:10}. These studies establish the
phenomenon but generally do not yield closed-form conditions relating
bot parameters to their asymptotic effect. Our model complements them
with an analytically tractable framework in which the effect of bot
strength, bot placement, network topology, and memory size on
asymptotic beliefs can be computed exactly (Sections~\ref{sec:bots}
and~\ref{sec:simulations}), providing a theoretical benchmark against
which empirical bot-influence findings can be interpreted.

% ---------------------------------------------------------------------
\begin{table*}[!t]
\caption{Comparison with the most closely related opinion dynamics
models. FJ: Friedkin--Johnsen. BC: bounded confidence.}
\label{tab:comparison}
\centering
\scriptsize
\setlength{\tabcolsep}{4pt}
\begin{tabular}{p{2.1cm} p{1.2cm} p{2.5cm} p{2.5cm} p{2.5cm} p{1.9cm} p{3.4cm}}
\toprule
Model & Opinion & Information used & Memory/delay & Persistent agents &
Graph & Closed-form influence conditions \\
\midrule
DeGroot \cite{MHD:74} & continuous & current opinions & none & no &
directed, weighted & --- \\
FJ \cite{NEF-ECJ:90} & continuous & current + initial & none &
partially stubborn & directed, weighted & fixed point vs.\
stubbornness \\
BC \cite{RH-UK:02,RH-UK:15} & continuous & current opinions
(proximal) & none & radicals / leaders \cite{RH-UK:15} & implicit
(confidence sets) & --- \\
Stubborn agents \cite{JG-RS:12} & continuous & current opinions &
none & continuum $\alpha_i \in [0,1]$ & undirected & equilibrium
characterization \\
Stubborn voter \cite{EY-AO-DA-AS-AS:13} & binary & current opinion of sampled
neighbor & none & fully stubborn & directed & stationary averages \\
Finite-memory urn networks \cite{SS-FA-BG:22_consensus,SS-FA-BG:22} & binary &
common finite window & common memory set & no & directed, weighted &
fixed point (homogeneous) \\
\textbf{Our Model} & \textbf{binary} & \textbf{per-link finite
windows} & \textbf{pairwise memory sets (delays $+$ loss)} &
\textbf{bots with strength $\eta_k$} & \textbf{directed, weighted} &
\textbf{threshold $\eta_B>p^{(*)}$; multi-bot cancellation $\widetilde{S}_B \eta_{p^{(*)}} = \mathbf{0}_N$} \\
\bottomrule
\end{tabular}
\end{table*}
\section{Preliminaries and Notation}\label{sec:preliminaries}
We denote the transpose, spectral radius of a matrix $Q$ by $Q^{\mathbf{T}}$ and $\rho(Q)$ respectively. Notation $[Q]_{ij}$ denotes the entry in the $i$th row and $j$th column of $Q$. The $N\times N$ identity and zero matrix are denoted by $I_{N\times N}$ and $0_{N \times N}$ respectively. Also, we often write a matrix as $[\quad]_{N \times M}$, where $N$ and $M$ are the number of rows and columns respectively. For a vector $\mathbf{x}$, its $i$-th component is denoted by $\mathbf{x}_{i}$. Given $\mathbf{x}, \mathbf{y}\in\mathbb{R}^{n}$, we write $\mathbf{x}\preceq \mathbf{y}$ iff $\mathbf{x}_i\leq \mathbf{y}_i$ for all $i \in \{1,\cdots,N\}$, $\mathbf{x}\prec \mathbf{y}$ iff $\mathbf{x}_i< \mathbf{y}_i$ for all $i \in \{1,\cdots,N\}$. The relations $\mathbf{x}\succeq \mathbf{y}$ and $\mathbf{x}\succ \mathbf{y}$ are defined analogously. The $N\times 1$ column vectors whose entries are all equal to one and zero are denoted by $\mathbf{1}_{N}$ and $\mathbf{0}_{N}$, respectively. Given two matrices $Q,R \in \mathbb{R}^{n \times m}$ we denote $Q \circ R$ as the matrix given by entry-wise multiplication of $Q$ and $R$ (i.e., $[Q \circ R]_{ij} = [Q]_{ij}[R]_{ij}$). Also, $Q<R$ iff $[Q]_{ij}<[R]_{ij}$ for all entries $i \in \{1,\cdots,n\}$ and $j \in \{1,\cdots,m\}$. We similarly define the relations $Q\leq R$, $Q\geq R$ and $Q>R$. Unless stated otherwise, all networks considered in this paper are connected and  consist of $N$ nodes. 

\section{The Model}\label{sec:model}
We consider a population of $N$ individuals placed on the vertices of a directed weighted network $\mathcal{G}_{N}$. At time $t=0$, each individual is assigned an equal \emph{initial bias}  $X_{0} \in (0,1]$. We also define $X_{ij,t}\in [0,1]$ to be the \emph{relative bias} of individual $i$ with respect to individual $j$. By convention, we assume that $X_{ij,0}= X_{0}$ for all $i,j \in \{1,\cdots, N\}$. At each subsequent time step $t > 0$, all individuals simultaneously express a binary opinion $Z_{i,t} \in \{0, 1\}$, which depends on the relative bias of the individual $i$ with respect to other individuals in the network at time $t-1$. Based on these expressed opinions, each individual $i$ updates all its relative biases. The association between $Z_{i,t}$ and $X_{ij,t-1}$'s can be mathematically written as 
\begin{align}\label{eqn:draw_general}
Z_{i,t} = \begin{cases}
    1 & \textrm{w.p.} \quad f_{i}(X_{i1,t-1},\cdots,X_{iN,t-1})\\
    0 & \textrm{w.p.} \quad 1-f_{i}(X_{i1,t-1},\cdots,X_{iN,t-1}),
\end{cases}   
\end{align}
where $f_{i}:[0,1]^{N} \to (0,1)$ is a function that governs the dependence of $X_{ij,t}$'s on the inherent opinions of other individuals in the network. The expression for $f_{i}$ captures the influence of various individuals in the network on the opinion update (both bias and expressed) of the $i$th individual.

Additionally, we incorporate communication delays and/or memory loss in this network. The former refers to an individual having access to a limited history of expressed opinions from other individuals, while the latter represents an individual itself not remembering its entire history of expressed opinions. To this end, we give the formal definition of the limited information available to agents of the network (we refer to this as memory).

In this paper, we restrict our attention to ``finitely recent'' and ``time-homogeneous'' memory sets. We denote by $\mathcal{M}_{ji}^{(t,T)}$ the memory set consisting of all the time instances less than $t$ for which the expressed opinion of agent $j$ is communicated to agent $i$ when updating the relative bias of $i$ with respect to $j$ at time $t$. Here $T$ denotes a fixed time instant at which information loss starts in the model. We denote $T$ to be the ``memory activation time" of the model. By finitely recent memory $\mathcal{M}^{(t,T)}_{ji}$, we mean that there exists a finite $K_{ji}>0$ such that $\max_{n \in \mathcal{M}^{(t,T)}_{ji}}(t-n)<K_{ji}$. We say that a collection of memory sets $\{\mathcal{M}_{ji}^{(t,T)}\}_{j,i\in \{1,\cdots,N\},t\geq 0}$ is time-homogeneous when:

 \begin{itemize}
 \item[(1)] For time $t=0$, all the memory sets are empty.
 \item[(2)] For all $t\in\{1,\cdots,T\}$, the memory sets $\mathcal{M}_{ji}^{(t,T)}=\{1,\cdots,t-1\}$ for all $i,j \in \{1,\cdots,N\}$.
 
     \item[(3)] For $t> T$, the cardinality of the memory sets $|\mathcal{M}_{ji}^{(t,T)}| =M \leq T$ for all $i,j \in \{1,\cdots,N\}$. Here, $M$ is referred to as the ``memory'' of the network. 
     
     \item[(4)] For each time instant $t> T$, we place the elements of $\mathcal{M}_{ji}^{(t,T)}$ in decreasing order and denote the $k$th  element of the set $\mathcal{M}_{ji}^{(t,T)}$ by $\mathcal{M}_{ji}^{(t,T)}(k)$. Then, for time-homogeneous memory sets, we have $\mathcal{M}_{ji}^{(t+h,T)}(k) - \mathcal{M}_{ji}^{(t,T)}(k) = h$ for all $h>0$ and $k \in \{1,\cdots,M\}$. 
 \end{itemize}
 The simplest non-trivial example for a finitely recent time-homogeneous memory set is $\mathcal{M}^{(t,M)}=\{t-1,\cdots,t-M\}$ for all $t>M$. We refer the reader to \cite{FA-TF:94} where this memory set is used to study finite memory P\'{o}lya urns. For ease of notation, hereafter we write the elements of finitely recent time-homogeneous memory sets $\mathcal{M}_{ji}^{(t,T)}$ as $\{t-k_{ji}^{(1)},\cdots, t-k_{ji}^{(M)}\}$, where $M$ is the cardinality of each memory set, and $1\leq k_{j,i}^{(1)}<k_{j,i}^{(2)}<\cdots<k_{j,i}^{(M)}\leq T$ for all $i,j \in \{1,\cdots,N\}$. Furthermore, we keep $T$ fixed throughout this paper and therefore omit its usage in the superscript of $\mathcal{M}_{ji}^{(t,T)}$ hereafter.     
% Similarly, we say that the memory $\mathcal{M}^{(t)}_{i,j}$ is finitely distinct iff $\max_{n \in \mathcal{M}^{(t)}_{i,j}}n<K$ for some finite $K>0$.
% refer to SIAM paper here

 Next, we consider the following linear form for \eqref{eqn:draw_general} to obtain the expressed opinions of the agents using relative biases from the previous time step:
\begin{align}\label{eqn:draw_S}
Z_{i,t} := \begin{cases}
1 & \sum_{j=1}^{N}s_{ij}X_{ij,t-1}\\\\
0 & 1-\sum_{j=1}^{N}s_{ij}X_{ij,t-1},
\end{cases}   
\end{align}
where $s_{ij}$ is the $(i,j)$th entry of an $N \times N$ row-stochastic matrix denoted by $S_{N}$, and $X_{ij,t}$ is the inherent opinion of individual $j$ known to individual $i$. From a graph-theoretic point of view, this model can be visualized as individuals placed on the vertices of a directed network $\mathcal{G}_{N}$ equipped with a weighted adjacency matrix $S_{N}$. The edge directed from individual $j$ to individual $i$ has a weight $s_{ij}$ and represents the ``influence'' of individual $j$ on the opinions formed by individual $i$. We consider the following linear form for the dependence of $X_{ij,t}$ on the expressed opinions and memory sets:
\begin{align}\label{eqn:ratio_X}
X_{ij,t} := c_{1,M} + c_{2,M}\hspace{-0.2cm}\sum_{n \in \mathcal{M}^{(t)}_{ji}}\hspace{-0.2cm}Z_{j,n}, 
\end{align}
where $|\mathcal{M}_{ji}^{(t)}|=M$. Since the relative bias $X_{ij,t}\in [0,1]$, the constants $c_{1,M}$ and $c_{2,M}$ in \eqref{eqn:ratio_X} must obey the following inequalities at all time instants:
\begin{align}\label{eqn:c_bounds}
\max (0,-c_{2,M}K)\leq c_{1,M} \leq \min (1,1-c_{2,M}K) \quad \textrm{for all} \quad K \in \{0,1,\cdots,M\}.  
\end{align}
This simplifies as follows:
\begin{itemize}
    \item Case I: $c_{2,M} \geq 0$. Then, $\max (0,-c_{2,M}K)=0$, and $1-c_{2,M}K$ decreases in $K$ so minimum occurs at $K=M$. So, \eqref{eqn:c_bounds}
    reduces to 
    \[ 0 \leq c_{1,M} \leq 1-c_{2,M}M \]
    \item Case II: $c_{2,M} < 0$. In this case, by an argument similar to the above, \eqref{eqn:c_bounds}
 reduces to
 \[ -c_{2,M}M\leq c_{1,M} \leq 1 \]
\end{itemize}
Thus, the model's feasibility depends on $|c_{2,M}M|$. Furthermore, we assume that $ c_{2,M} \in (-1/M, 1/M) - \{0\}$, for it to represent the scaling factor for  the sum of expressed opinions $Z_{j,n}$ across the $M$ memory time stamps in the memory set $\mathcal{M}_{ji}^{(t)}$. Note that, if we set the initial bias $c_{1,M}=0$, then with a positive probability, we can have $X_{ij,1} =0$ for all $i,j \in \{1,\cdots,N\}$. This sample path would lead to $Z_{i,t} = 0$ for all $i \in \{1,\cdots, N\}$ and $t\geq 1$, which would give $X_{ij,t} =0$ for all $i,j \in \{1,\cdots,N\}$ and $t\geq 1$. To ensure that our model never enters such absorbing states, a constant positive  ``drift'' of $c_{1,M}$ must be present in \eqref{eqn:ratio_X}. The removal of this drift factor $c_{1,M}$ in \eqref{eqn:ratio_X} significantly alters the long-run behavior of $X_{ij,t}$'s. One such consensus model is discussed in \cite{SS-FA-BG:22_consensus}, where the underlying Markov chain has two absorbing states and the limiting distribution depends on the initial conditions of the model. 

A simplified version for \eqref{eqn:ratio_X} is when all the memory sets are same i.e, $\mathcal{M}_{ij}^{(t)} = \mathcal{M}^{(t)}$ for all $i,j \in \{1,\cdots,N\}$. In this case,
\begin{align}\label{eqn:ratio_simplified}
X_{ij,t} = c_{1,M} + c_{2,M}\sum_{n \in \mathcal{M}^{(t)}}Z_{j,n}.   
\end{align}
Note that, here the relative bias of individual $i$ with respect to $j$ is same for all $i \in \{1,\cdots, N\}$. Therefore, we can define bias of  individual $j$ to be $X_{j,t}:= X_{ij,t}$ for all $i \in \{1,\cdots,N\}$. An analogous special case of \eqref{eqn:ratio_simplified} has already been studied in \cite{SS-FA-BG:22} for an interacting network of P\'{o}lya urns with memory $\mathcal{M}_{ij}^{(t)} = \{t-1,\cdots,t-M\}$ for all urns $i,j$ in the network. The bias of an individual $X_{i,t}$ for this setup is defined as the ratio of red balls in urn $i$ at time $t$.

We now develop the framework for analysis of \eqref{eqn:ratio_X}. The main objective here is to obtain a \emph{time-delayed discrete time recursion} for a suitably defined parameter. To this end, for an individual $i$ in the network, we define its \emph{belief} at time $t>T$ as $P_{i}(t):= P(Z_{i,t}=1)$ and it can be computed using \eqref{eqn:draw_S} and \eqref{eqn:ratio_X} as follows:
\begin{align}\label{eqn:P_calc}
P_{i}(t) &= E [E[Z_{i,t}|\mathcal{F}_{t-1}]] = E[\sum_{j=1}^{N}s_{ij}X_{ij,t-1}] = c_{1,M} + c_{2,M}\sum_{j=1}^{N}\sum_{n \in \mathcal{M}^{(t-1)}_{ji}}\hspace{-0.3cm}s_{ij}P_{j}(n),
\end{align}
where $\mathcal{F}_{t-1}$ is the $\sigma$-algebra generated by the random variables $\{ Z_{1,n}, Z_{2,n}, \cdots, Z_{N,n} : n \in \bigcup\limits_{j,k=1}^{N}\hspace{-0.02cm}\mathcal{M}_{jk}^{(t)} \}$.  It is now easy to see that the belief of individual $i$ at time $t$ (given by $P_{i}(t)$) in \eqref{eqn:P_calc} depends on past beliefs held by itself as well as the other individuals in the network through the interaction parameters $s_{ij}$'s. The memory sets $\mathcal{M}_{ij}^{(t)}$'s determine the past beliefs of individual $j$ which influence the belief of individual $i$ at time $t$. Furthermore, due to these memory sets being finitely recent and time-homogeneous, we can write $\mathcal{M}_{ji}^{(t)}= \{t-k_{ji}^{(1)},\cdots,t-k_{ji}^{(M)}\}$ to obtain an alternate form of \eqref{eqn:P_calc}:
\begin{align}\label{eqn:P_eqn}
P_{i}(t) = c_{1,M} + c_{2,M}\sum_{j=1}^{N}s_{ij}\hspace{-0.5cm}\sum_{n \in \{k_{ji}^{(1)},\cdots, k_{ji}^{(M)}\}}\hspace{-0.7cm}P_{j}(t-1-n).    
\end{align}
We now obtain a matrix form for \eqref{eqn:P_calc}, by defining $\widetilde{P}_{t}^{(L)}:=(P(t),\cdots,P(t-L+1))^{\mathbf{T}}$ for all $t>T$, where $L := \max\limits_{n}\{n \hspace{0.1cm}|\hspace{0.1cm} t-n \in \bigcup_{i,j=1}^{N}\mathcal{M}_{ji}^{(t)} \hspace{0.2cm} \textrm{for all} \hspace{0.2cm} t>T\}$ and $P(t):= (P_{1}(t),\cdots,P_{N}(t))^{\mathbf{T}}$. The time instant $L$ refers to the ``memory depth'' of the model and is the maximum possible time-delay present in \eqref{eqn:P_eqn}. Also, it follows from the definition of memory depth that $L<T$. We now write the following recursion equation in the column vector $\widetilde{P}_{t}^{(L)}$ using \eqref{eqn:P_calc}:
\begin{align}\label{eqn:X}
\widetilde{P}_{t}^{(L)} = J_{N}^{(L)}\widetilde{P}_{t-1}^{(L)} + c_{1,M}e_{N,N L}
\end{align}
where
\begin{align}\label{eqn:J_matrix}
 J_{N}^{(L)}= \left[ 
\begin{array}{c|c|c|c} 
  c_{2,M}S_{N}\circ \mathbf{M}_{N}^{(1)}& c_{2,M}S_{N}\circ \mathbf{M}_{N}^{(2)}& \cdots & c_{2,M}S_{N}\circ \mathbf{M}_{N}^{(L)} \\
  \hline 
  {I}_{N \times N} & {0}_{N \times N} & \cdots & {0}_{N \times N}  \\
  \hline 
  {0}_{N \times N} & {I}_{N \times N} & \cdots & {0}_{N \times N}\\
  \hline
  \vdots & \vdots &  \vdots & \vdots \\
  \hline 
  {0}_{N \times N} & {0}_{N \times N} & \cdots & {0}_{N\times N}
\end{array}\right] 
\end{align}
is a block matrix of size $NL\times NL$. It has $L^{2}$ blocks, each of which is a square matrix of size $N$. For two matrices $A$ and $B$, each of size $N \times N$, we denote $A \circ B$ to be the Hadamard (or Schur) product (see section~\ref{sec:preliminaries}) of $A$ and $B$. Furthermore, $e_{N,NL}$ is a column vector of length $NL$ with ones in the first entries $N$ and zeros in the remaining entries. We call $\mathbf{M}_{N}^{(k)}$ the ``{indicator matrix}'' of size $N$ for the time instant $t-k$ (for brevity, we omit writing $t$ in the notation here), and it is defined as follows:
\begin{align}\label{eqn:bold_M_matrix}
[\mathbf{M}_{N}^{(n)}]_{ij}:= \begin{cases}
1 & \textrm{if} \quad t-1-n \in \mathcal{M}_{ji}^{(t-1)} \\
0 & \textrm{otherwise}.
\end{cases}    
\end{align}
Note that, for ease of notation we have written $J_{N,\mathbf{M}_{N}^{(1)},\cdots, \mathbf{M}_{N}^{(L)}}^{(L)}$ as $J_{N}^{(L)}$. In the next section, we discuss the stability analysis of \eqref{eqn:X}. 

%With this definition in place, we have the following:
%\begin{enumerate}
%    \item Each indicator matrix is a binary matrix: $\mathbf{M}_N^{(k)} \in \{0,1\}^{N\times N}, \text{ for }
%\qquad k=1,\ldots,L$.

%\item Since each memory set
%\(
%\mathcal{M}_{ji}^{(t)}
%\)
%contains exactly \(M\) elements, we have
%\[
%\sum_{k=1}^{L}
%[\mathbf{M}_N^{(k)}]_{ij}
%= M, \text{ for all} \hspace{0.2cm} i, j \in \{1,\cdots,N\}.
%\]
%Equivalently, $\sum_{k=1}^{L}
%\mathbf{M}_N^{(k)} = M\,\mathbf{1}_N\mathbf{1}_N^{\mathbf T}$.
%\item Hadamard decomposition of \(M S_N\): for $k=1,\ldots,L$, define $B_k \coloneqq S_N\circ \mathbf{M}_N^{(k)}.$ Then, from the previous identity if follows that
%\[
%\sum_{k=1}^{L} B_k
%=
%M S_N.
%\]

%\item Spectral radius bound: Since
%\(
%[\mathbf{M}_N^{(k)}]_{ij}\in\{0,1\},
%\)
%we have $0 \le B_k \le S_N$, for all $k=1,\ldots,L$. Perron--Frobenius theory \cite{RAH-CRJ:12} yields that for every $k \in \{1,\cdots,L\}$,
%\[ \rho(B_k) \le \rho(S_N) = 1. \]

%\item For every pair $(i,j)$,
%\[
%\sum_{k=1}^{L}
%(B_k)_{ij}
%=
%\sum_{k=1}^{L}
%s_{ij}
%[\mathbf{M}_N^{(k)}]_{ij}
%=
%M s_{ij}.
%\]
%Thus, due to time-homogeneity of memory sets, each edge weight $s_{ij}$ is replicated exactly $M$ times
%across the collection $\{B_k\}_{k=1}^{L}$.
%\end{enumerate}

\section{Stability and Equilibrium Analysis} \label{sec:Analysis}
We begin by establishing a spectral property of the matrix $J_{N}^{(L)}$.
\begin{lemma}\label{lem:spectral_radius}
    Suppose $J_{N}^{(L)}$ is as defined above. Then, $\rho(J_{N}^{(L)})) < 1$.
\end{lemma}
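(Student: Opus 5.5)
We argue by contradiction on an eigenvalue of modulus at least one, exploiting the companion (block-shift) structure of $J_{N}^{(L)}$ in \eqref{eqn:J_matrix}. Write $B_k := c_{2,M}\,S_{N}\circ \mathbf{M}_{N}^{(k)}$ for $k\in\{1,\cdots,L\}$, so that the first block row of $J_{N}^{(L)}$ is $(B_1,\cdots,B_L)$ and the remaining block rows are block shifts. Suppose $\lambda\in\mathbb{C}$ with $|\lambda|\geq 1$ is an eigenvalue, and let $v=(v_1^{\mathbf{T}},\cdots,v_L^{\mathbf{T}})^{\mathbf{T}}\neq 0$, with $v_k\in\mathbb{C}^N$, satisfy $J_{N}^{(L)}v=\lambda v$.

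First I use the shift rows. They give $v_k=\lambda v_{k+1}$ for $k=1,\cdots,L-1$, so $v_k=\lambda^{L-k}x$ with $x:=v_L$. Since $\lambda\neq 0$ and $v\neq 0$, we must have $x\neq 0$. Next, the first block row reads $\lambda^{L}x=\sum_{k=1}^{L}\lambda^{L-k}B_k x$. Dividing by $\lambda^L$ yields the characteristic relation
\[
x=\sum_{k=1}^{L}\lambda^{-k}B_k x .
\]

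The key combinatorial input is the identity
\[
\sum_{k=1}^{L}[\mathbf{M}_{N}^{(k)}]_{ij}=M \quad \text{for all } i,j .
\]
This holds because, for $t>T$, each memory set $\mathcal{M}_{ji}^{(t-1)}$ has exactly $M$ elements. By the definition of the memory depth $L$, each of these elements lies within the delay window indexed by \eqref{eqn:bold_M_matrix}, and each delay is counted in exactly one indicator matrix. I expect this step to be the main point needing care. The indexing in \eqref{eqn:bold_M_matrix} (delay $t-1-n$) and in \eqref{eqn:P_eqn} must be matched, so that every element of the memory set falls in exactly one $\mathbf{M}_N^{(k)}$ with $1\le k\le L$, with none lost outside the window and none double counted. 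I would verify this directly from the time-homogeneity property (4) and the definition of $L$.

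Now take $i$ with $|x_i|=\|x\|_\infty>0$. Using $|\lambda|^{-k}\le 1$, $s_{ij}\ge 0$, the row-stochasticity of $S_N$, and the identity above, we get
\[
|x_i|\le \sum_{k=1}^{L}\sum_{j=1}^{N}|c_{2,M}|\,s_{ij}[\mathbf{M}_{N}^{(k)}]_{ij}|x_j|
\le |c_{2,M}|\,M\sum_{j=1}^N s_{ij}\,\|x\|_\infty=|c_{2,M}|M\,\|x\|_\infty .
\]
Since $c_{2,M}\in(-1/M,1/M)$, we have $|c_{2,M}|M<1$, so $\|x\|_\infty<\|x\|_\infty$, a contradiction. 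Hence every eigenvalue satisfies $|\lambda|<1$, and so $\rho(J_{N}^{(L)})<1$. The case $\lambda=0$ needs no treatment, since it already satisfies $|\lambda|<1$.
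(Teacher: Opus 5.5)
Your proof is correct and matches the paper's: the block-shift rows reduce the eigen-equation to $\lambda^L v_1 = c_{2,M}\sum_{l=1}^{L}\lambda^{L-l}(S_N\circ\mathbf{M}_N^{(l)})v_1$, and evaluating at a maximal-modulus coordinate with $|\lambda|\geq 1$ and $M|c_{2,M}|<1$ gives the contradiction. The indexing step you flagged only needs $\sum_{k=1}^{L}[\mathbf{M}_N^{(k)}]_{ij}\le M$, which is the bound the paper uses and holds automatically: distinct $k$ correspond to distinct elements $t-1-k$ of the $M$-element memory set, so you do not need to check that every element falls inside the window.
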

\textit{Proof:}
    Suppose $\lambda \in \sigma(J_N^{(L)})$.  Then, $J_N^{(L)} v = \lambda v$. Write $v=(v_1, \dots, v_L)^{\mathbf{T}}$ such that $v_i \in \mathbb{R}^N$. Then, for the homogeneous case
\begin{align*}
\lambda v_1 = c_{2, M} \sum_{l=1}^L (S_N \circ \mathbf{M}_N^{(l)}) v_l;\quad  
v_1 = \lambda v_2;\quad 
v_2 = \lambda v_3 \quad \cdots \quad 
v_{L-1} = \lambda v_{L}.
\end{align*}
On further solving this yields
\[ \lambda v_1 = c_{2, M} \sum_{l=1}^L \lambda^{-(l-1)} (S_N \circ \mathbf{M}_N^{(l)}) v_1  \]
Multiplying by $\lambda^{L-1}$ we get,
 \begin{align} \label{eq:chareq} \lambda^L v_1 = c_{2, M} \sum_{l=1}^L \lambda^{L-l} (S_N \circ \mathbf{M}_N^{(l)} )v_1.  \end{align}
Suppose $i=\arg\max |v_1(k)|$, where $v_1(k)$ denotes the $k$th co-ordinate of the vector $v_1$. Evaluating the norm on the $i$th row on both sides of \eqref{eq:chareq}, we get the following.
 \begin{align*} 
 \vert \lambda \vert^L \vert v_1(i) \vert  \leq \sum_{l=1}^L  |c_{2, M}| \vert \lambda \vert^{L-l}  \sum_{j=1}^N  \vert [S_N \circ \mathbf{M}_N^{(l)}]_{ij} \vert | v_1(j) | \leq  |c_{2, M}| \vert \lambda \vert^{L-l} \sum_{l=1}^L \sum_{j=1}^N  \vert [S_N \circ \mathbf{M}_N^{(l)}]_{ij} \vert | v_1(i) |.
 \end{align*}
 Thus,
\begin{align}\label{eq:final_contr} \vert \lambda \vert^L \vert \leq \vert \lambda \vert^{L-l} \sum_{l=1}^L R_i^{(k)}, \end{align}
where $R_i^{(k)}$ is the row-sum of the $i$-th row in the $k$-th block. Since $\sum_{l=1}^{L}R_i^{(l)}\le M$, if $|\lambda|\ge 1$, then
$|\lambda|^{L-l}\le |\lambda|^{L}$ for all $l\in\{1,\ldots,L\}$.
Hence,
\[
|\lambda|^{L}
\le
|c_{2,M}|\,|\lambda|^{L}\sum_{l=1}^{L}R_i^{(l)}
\le
M|c_{2,M}|\,|\lambda|^{L},
\]
which contradicts the assumption $M|c_{2,M}|<1$. Therefore,
$|\lambda|<1$, and hence $\rho(J_{N}^{(L)})<1$.
$\null\nobreak\hfill\ensuremath{\square}$

We are now ready to characterize the equilibrium of the dynamics in~\eqref{eqn:P_eqn}.
\begin{theorem}[Existence, uniqueness and stability of the equilibrium point]\label{thm:fixed_pt}
    The unique and stable fixed point of the recursion in \eqref{eqn:P_eqn} is given by,
\begin{align}\label{eqn:homo_fixed_point}
P^{(*)} = \frac{c_{1,M}}{1-c_{2,M}M}\mathbf{1}_{N},
\end{align}
where $P^{(*)}: = (P_{1}^{(*)},\cdots,P_{N}^{(*)})^{\mathbf{T}}$.
\end{theorem}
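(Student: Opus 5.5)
The plan is to treat \eqref{eqn:P_eqn} through its companion form \eqref{eqn:X}, namely the affine recursion $\widetilde{P}_{t}^{(L)} = J_{N}^{(L)}\widetilde{P}_{t-1}^{(L)} + c_{1,M}e_{N,NL}$. Existence, uniqueness and stability will all follow from Lemma~\ref{lem:spectral_radius}. Since $\rho(J_{N}^{(L)})<1$, the matrix $I_{NL\times NL}-J_{N}^{(L)}$ is invertible. Hence the fixed-point equation $\widetilde{P} = J_{N}^{(L)}\widetilde{P} + c_{1,M}e_{N,NL}$ has exactly one solution, $\widetilde{P}^{(*)} = c_{1,M}(I-J_{N}^{(L)})^{-1}e_{N,NL}$.

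For stability, I would set $E_t := \widetilde{P}_{t}^{(L)}-\widetilde{P}^{(*)}$. This gives $E_t = J_{N}^{(L)}E_{t-1}$, so $E_t = (J_{N}^{(L)})^{t-T}E_T$. By Gelfand's formula, $\|(J_{N}^{(L)})^{n}\|\to 0$ geometrically. Therefore every trajectory starting from any admissible initial segment (times $\le T$) converges to $\widetilde{P}^{(*)}$, and the fixed point is globally asymptotically stable.

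It remains to identify $\widetilde{P}^{(*)}$ explicitly. The lower identity blocks of $J_{N}^{(L)}$ force any fixed point to have the stacked form $\widetilde{P}^{(*)} = (P^{(*)},\dots,P^{(*)})^{\mathbf{T}}$, with $L$ copies of a single vector $P^{(*)}\in\mathbb{R}^N$. Equivalently, I would plug a constant-in-time sequence $P_j(t)\equiv P_j^{(*)}$ directly into \eqref{eqn:P_eqn}.

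I would guess $P^{(*)} = p\mathbf{1}_N$ and verify it. By time-homogeneity, each memory set contains exactly $M$ indices $k_{ji}^{(1)},\dots,k_{ji}^{(M)}$. This is the identity $\sum_{l=1}^{L}\mathbf{M}_N^{(l)} = M\mathbf{1}_N\mathbf{1}_N^{\mathbf{T}}$, so $\sum_{l}(S_N\circ\mathbf{M}_N^{(l)}) = MS_N$. Together with row-stochasticity $S_N\mathbf{1}_N=\mathbf{1}_N$, the $i$th equation becomes
\begin{align*}
p = c_{1,M} + c_{2,M}\sum_{j=1}^{N}s_{ij}\,M\,p = c_{1,M}+c_{2,M}Mp .
\end{align*}
Since $|c_{2,M}M|<1$, this gives $p = c_{1,M}/(1-c_{2,M}M)$. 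By the uniqueness established above, this constant vector is the fixed point, which yields \eqref{eqn:homo_fixed_point}. Feasibility is immediate from the bounds \eqref{eqn:c_bounds}, which place $p\in[0,1]$.

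The only delicate step is the bookkeeping that the Hadamard-masked blocks sum to $MS_N$ for every row. This requires every pair $(i,j)$ to have exactly $M$ active delays within $\{1,\dots,L\}$, which follows from items (3)–(4) of the time-homogeneity definition. Everything else is standard linear-systems theory resting on Lemma~\ref{lem:spectral_radius}.
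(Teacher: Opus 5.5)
Your proposal is correct and takes essentially the same route as the paper. Lemma~\ref{lem:spectral_radius} gives invertibility of $I_{NL\times NL}-J_N^{(L)}$, and hence a unique, globally asymptotically stable fixed point of \eqref{eqn:X}; this is then reduced to $P^{(*)}=c_{2,M}MS_NP^{(*)}+c_{1,M}\mathbf{1}_N$ using the $M$-fold count of active delays and row-stochasticity. The only cosmetic difference is that you verify the constant vector $p\mathbf{1}_N$ and invoke uniqueness of the companion-form fixed point, whereas the paper solves the reduced system by inverting $I_{N\times N}-c_{2,M}MS_N$ directly.
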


%\textit{Proof:} %We begin by observing that given $|c_{2,M}|<\frac{1}{M}$, the row-sums of the first block of $J_{N}^{(L)}$ are strictly less than one. Therefore, $({I}_{NL \times NL}-J_{N}^{(L)})$ is an invertible matrix. 
 
\textit{Proof:}
By Lemma~\ref{lem:spectral_radius}, we have $\rho(J_{N}^{(L)})<1$. Hence,
$(I_{NL\times NL}-J_{N}^{(L)})$ is invertible, and \eqref{eqn:X}
admits a unique fixed point given by
\begin{align}\label{eqn:PL_fixed_pt}
\widetilde{P}^{(*,L)}
=
c_{1,M}(I_{NL\times NL}-J_{N}^{(L)})^{-1}e_{N,NL}.
\end{align}
Moreover, $\rho(J_{N}^{(L)})<1$ implies that this fixed point is
globally asymptotically stable.

Since $L$ is finite and
$\widetilde{P}_{t}^{(L)}\to\widetilde{P}^{(*,L)}$ as $t\to\infty$,
it follows that
\[
\widetilde{P}^{(*,L)}
=
(P^{(*)},\cdots,P^{(*)})^{\mathbf T},
\]
where
$P^{(*)}=(P_{1}^{(*)},\cdots,P_{N}^{(*)})^{\mathbf T}$
is the fixed point of \eqref{eqn:P_eqn}. Therefore, taking the limit
as $t\to\infty$ in \eqref{eqn:P_eqn}, we obtain
\[
P_i^{(*)}
=
c_{1,M}
+
c_{2,M}M\sum_{j=1}^{N}s_{ij}P_j^{(*)},
\]
which can be written in matrix form as
\begin{align}\label{eqn:P_star_matrix}
P^{(*)}
=
c_{2,M}MS_NP^{(*)}
+
c_{1,M}\mathbf1_N.
\end{align}

Since $S_N$ is a row-stochastic matrix,
\[
(I_{N\times N}-c_{2,M}MS_N)\mathbf1_N
=
(1-c_{2,M}M)\mathbf1_N.
\]
Further, the row sums of $c_{2,M}MS_N$ are strictly less than one.
Hence $(I_{N\times N}-c_{2,M}MS_N)$ is invertible, and
\[
(I_{N\times N}-c_{2,M}MS_N)^{-1}\mathbf1_N
=
\frac{1}{1-c_{2,M}M}\mathbf1_N.
\]
Therefore,
\[
P^{(*)}
=
\frac{c_{1,M}}{1-c_{2,M}M}\mathbf1_N.
\]
$\null\nobreak\hfill\ensuremath{\square}$

%%%%%%%%%%%%%%%%%%%%%%%%%%%%%%%%%%%%%%%%%%%%%%%%%%%%%%%%%%%%%%%%%%%%%%%%%%%% End of Proof of Theorem 1  %%%%%%%%%%%%%%%%%%%%%%%%%%%%%%%%%%%%%%%%%%%%%%%%%%%%%%%%%%%%%%%%%%%%%%%%%%%%%%%%%
We now use block matrix inversion formula~\cite[section 0.7.3]{RAH-CRJ:12} to find a closed-form formula for $({I}_{NL\times NL}- J_{N}^{(L)})$. To this end, we define the following matrices:
\begin{align*}
&B_{1}  = \left[{I}_{N \times N}-c_{2,M}S_{N} \circ \mathbf{M}_{N}^{(1)}\right] \quad \quad 
B_{2}  = \left[ 
\begin{array}{c|c|c} 
   -c_{2,M}S_{N}\circ \mathbf{M}_{N}^{(2)}& \cdots & -c_{2,M}S_{N}\circ \mathbf{M}_{N}^{(L)}
\end{array}\right]\\ 
&B_{3}  = \left[ 
\begin{array}{c} 
  -{I}_{N\times N}  \\
  \hline 
  {0}_{N\times N} \\
  \hline
  \vdots \\
  \hline 
  {0}_{N \times N}
\end{array}\right]\quad \quad B_{4}  = \left[ 
\begin{array}{c|c|c|c} 
   {I}_{N \times N} & {0}_{N \times N} & \cdots & {0}_{N \times N}  \\
  \hline 
    -{I}_{N \times N} & {I}_{N \times N} & \cdots & {0}_{N \times N}\\
  \hline
   \vdots & \vdots & \vdots & \vdots \\
  \hline 
   {0}_{N \times N} & \cdots & -{I}_{N \times N} & {I}_{N \times N}
\end{array}\right].
\end{align*}
We can now compute:
\begin{align}\label{eqn:I-J_inverse}
&(I_{NL\times NL} -  J_{N}^{(L)})^{-1} = \begin{bmatrix}
    B_{1} & B_{2}\\
    B_{3} & B_{4}
\end{bmatrix}^{-1} = \begin{bmatrix}
 B_{1}^{-1} + B_{1}^{-1}B_{2}((I-J)/B_{1})^{-1}B_{3}B_{1}^{-1} & -B_{1}^{-1}B_{2}((I-J)/B_{1})^{-1}\\
 -((I -J)/B_{1})^{-1}B_{3}B_{1}^{-1} & ((I-J)/B_{1})^{-1}
\end{bmatrix},
\end{align}
where $(I-J)/B_{1} :=((I_{NL\times NL}-J_{N}^{(L)})/B_{1}) = B_{4} - B_{3}B_{1}^{-1}B_{2}$ is the Schur complement~\cite[section 0.8.5]{RAH-CRJ:12}.

For ease of notation, we will denote $p^{(*)}:= c_{1,M}/(1-c_{2,M}M)$ hereafter. A crucial takeaway for \eqref{eqn:homo_fixed_point} is that $P^{(*)}$ does not depend on the network structure and interaction weights $s_{ij}$'s which occurs due to homogeneity of the model ($c_{1,M}$ and $c_{2,M}$ are same for all the individuals in the network).  

We now briefly discuss the non-homogeneous case and other properties of the model in the following remarks:
\begin{remark}\label{rem:C1_zero} As discussed earlier, for the case $c_{1,M}=0$ in \eqref{eqn:ratio_X}, there is a nonzero probability that $Z_{i,n}=0$ for all $n \in \mathcal{M}_{ji}^{(t)}$, and for all $i,j \in \{1,\cdots,N\}$ at some time instant $t$. Such an occurrence would imply $P_{i}(t') =0$ for all $t'>t$, for all $i \in \{1,\cdots,N\}$. Therefore, $P^{(*)}= \mathbf{0}_{N}$ for this case, which can also be obtained  from the fixed point equation $\eqref{eqn:homo_fixed_point}$. 
\end{remark}
\begin{remark}\label{rem:non_homo}(Non-homogeneous case). Note that the constants $c_{1,M}$ and $c_{2,M}$ in \eqref{eqn:ratio_X} are the same for all the individuals in the network, which corresponds to the homogeneity in our model. For the non-homogeneous case, the entries of the fixed point $P^{(*)}$ are node-dependent and difficult to compute. However, similar to the homogeneous case, we can show the existence of the unique fixed point for the corresponding recursion obtained in $\widetilde{P}^{(*,L)}$. We generalize our opinion dynamics model to a non-homogeneous form as follows:
\begin{align}\label{eqn:X_non_homo}
X_{ij,t} = c_{1,M}^{(i)} + c_{2,M}^{(i)}\hspace{-0.3cm}\sum_{n\in \mathcal{M}_{ji}^{(t)}}\hspace{-0.3cm}Z_{j,n},   
\end{align}
where $\widetilde{c}_{1,M}:= (c_{1,M}^{(1)},c_{1,M}^{(2)},\cdots,c_{1,M}^{(N)})^{\mathbf{T}}$, and $\widetilde{c}_{2,M}:= (c_{2,M}^{(1)},c_{2,M}^{(2)},\cdots,c_{2,M}^{(N)})^{\mathbf{T}}$, with following two assumptions:
\begin{itemize}\setlength{\itemsep}{6pt}
\item[(1)] $\max (0,-c^{(i)}_{2,M}K)\leq c^{(i)}_{1,M} \leq \min (1,1-c^{(i)}_{2,M}K)$ for $K \in \{0,1,\cdots,M\}$ and $i\in \{1,\cdots,N\}$.
\item[(2)] $ c^{(i)}_{2,M} \in (-1/M, 1/M) - \{0\}$ for all $i \in \{1,\cdots,N\}$.
\end{itemize}
Similar to the previous analysis, we obtain the following recursion in $\widetilde{P}_{t}^{(L)}$ here:
\begin{align}\label{eqn:recursion_non_homo}
\widetilde{P}_{t}^{(L)} = J_{N}^{(L)}\widetilde{P}_{t-1}^{(L)} + [\hspace{0.1cm}\widetilde{c}_{1,M}\hspace{0.1cm}|\hspace{0.1cm}\mathbf{0}_{N(L-1)}].
\end{align}
Here, $[\hspace{0.1cm}\mathbf{a}\hspace{0.1cm}|\hspace{0.1cm}\mathbf{b}\hspace{0.1cm}]$ is concatenation of two column vectors $\mathbf{a}$ and $\mathbf{b}$, and  
\begin{align}\label{eqn:J_matrix_non_homo}
 J_{N}^{(L)}= \left[ 
\begin{array}{c|c|c|c} 
  C_{2,M}^{(N)}\circ S_{N}\circ \mathbf{M}_{N}^{(1)}& C_{2,M}^{(N)}\circ S_{N}\circ \mathbf{M}_{N}^{(2)}&  \cdots & C_{2,M}^{(N)}\circ S_{N}\circ \mathbf{M}_{N}^{(L)} \\
  \hline 
 {I}_{N\times N} & {0}_{N \times N} & \cdots & {0}_{N \times N}  \\
  \hline 
  {0}_{N \times N} & {I}_{N \times N} & \cdots & {0}_{N \times N}\\
  \hline
  \vdots & \vdots & \vdots & \vdots \\
  \hline 
  {0}_{N \times N} & {0}_{N \times N} & \cdots & {0}_{N \times N}
\end{array}\right] 
\end{align}
where $C_{2,M}^{(N)} := \widetilde{c}_{2,M}\mathbf{1}_{N}^{\mathbf{T}}$ is a matrix of size $N$. The invertibility of $(I_{NL\times NL}-J_{N}^{(L)})$ guarantees the existence of a unique fixed point for \eqref{eqn:recursion_non_homo}:
\begin{align}\label{eqn:fixed_pt_vector_non_homo}
\widetilde{P}^{(*,L)} = (I_{NL\times NL}-J_{N}^{(L)})^{-1}[\hspace{0.1cm}\widetilde{c}_{1,M}\hspace{0.1cm}|\hspace{0.1cm}\mathbf{0}_{N(L-1)}]. 
\end{align}
Unlike the homogeneous case, the fixed point in \eqref{eqn:fixed_pt_vector_non_homo} depends on the entries of the matrix $S_{N}$ (i.e., the network structure and edge weights). However, for the non-homogeneous case, it is challenging to obtain a closed-form formula for this fixed point. For instance, the fixed point $\widetilde{P}^{(*,1)}$ for a $2$-node network equipped with the interaction matrix $S_{N} = \begin{bmatrix}
    s_{11} & s_{12}\\
    s_{21} & s_{22}
\end{bmatrix}$ is given by: 
\begin{align}\label{eqn:fix_pt_nonhomo_twonodes}
\widetilde{P}^{(*,1)} = \begin{bmatrix}
    \frac{(1-c_{2,1}^{(2)}s_{22})c_{1,1}^{(1)} + c_{2,1}^{(2)}c_{1,1}^{(2)}s_{12}}{(1-c_{2,1}^{(1)}s_{11})(1-c_{2,1}^{(2)}s_{22})-c_{2,1}^{(1)}c_{2,1}^{(2)}s_{12}s_{21}}\\\\
    \frac{(1-c_{2,1}^{(1)}s_{11})c_{1,1}^{(2)} + c_{2,1}^{(1)}c_{1,1}^{(1)}s_{21}}{(1-c_{2,1}^{(1)}s_{11})(1-c_{2,1}^{(2)}s_{22})-c_{2,1}^{(1)}c_{2,1}^{(2)}s_{12}s_{21}}
\end{bmatrix}.    
\end{align}
One can verify by direct differentiation that each entry of~\eqref{eqn:fix_pt_nonhomo_twonodes} is increasing in $M$ 
(by replacing $c_{2,1}$ by $M \cdot c_{2,M}$ and differentiating) under the 
parameter constraints of the model. For the homogeneous case, the monotonicity follows directly from~\eqref{eqn:homo_fixed_point}. We formalize this observation as the following conjecture for the non-homogeneous setting: 

\begin{conjecture}\label{thm:conjecture}
(Monotonicity of Fixed Point in Memory for Non-Homogeneous Networks).
Consider the non-homogeneous model~\eqref{eqn:X_non_homo} with parameters
$c_{2,M}^{(i)} \in (0, 1/M)$ and
$0< c_{1,M}^{(i)} < \min(1,1-c_{2,M}^{(i)}M)$
for all $i \in \{1,\cdots,N\}$, and a fixed network $\mathcal{G}_N$
with interaction matrix $S_N$. Then, with all parameters other than the memory size $M$ held fixed, the fixed point $\widetilde{P}^{(*,L)}$ in \eqref{eqn:fixed_pt_vector_non_homo} is strictly increasing with $M$ for each $i \in \{1,\ldots,N\}$.
\end{conjecture}

The general case for arbitrary $N$ and $M$ remains open and constitutes a direction for future work. 
\end{remark}
\begin{remark}\label{rem:Markov}(Markov chain analysis for our model). Given a time instant $t>T$, we define the random vector $Z_{t}:=(Z_{1,t},\cdots,Z_{N,t})$ for the network $\mathcal{G}_{N}$ equipped with the relative bias update given by \eqref{eqn:X_non_homo}. Given that $|\mathcal{M}^{(t)}_{ji}|=M$ for all $i,j\in \{1,\cdots,N\}$ (this follows from time-homogeneity of memory sets), the random vector $Z_{t}$ is a $L$-order Markov chain, where $L:=\max_{n}\{n \hspace{0.1cm}|\hspace{0.1cm} t-n \in \cup_{i,j=1}^{N}\mathcal{M}_{ji}^{(t)} \hspace{0.1cm} \textrm{for all} \hspace{0.1cm} t>T\}$ is the memory depth. We can write the transition probabilities for this Markov chain using \eqref{eqn:draw_S} and \eqref{eqn:X_non_homo}:
\begin{align}\label{eqn:tp}
P[Z_{t} =a_{L+1}|Z_{t-1}=a_{L},\cdots ,Z_{t-L}=a_{1}]= \prod_{i=1}^{N}\Bigg((2a_{i,L+1}-1)\sum\limits_{j=1}^{N}s_{ij}(c_{1,M}^{(i)} + c_{2,M}^{(i)}\hspace{-0.4cm}\sum_{t-k\in \mathcal{M}_{ji}^{(t)}}\hspace{-0.4cm}a_{j,k}) +(1-a_{i,L+1})\Bigg),
\end{align}
where $a_{l}=(a_{1,l},\cdots,a_{N,l})\in \{0,1\}^{N}$, for $l \in \{1,\cdots, L+1\}$.
Note that in \eqref{eqn:tp} we write the transition probabilities of the underlying Markov process for the non-homogeneous case. Furthermore, the presence of strictly positive drift factors $c_{1,M}^{(i)}$'s makes the underlying Markov process irreducible and aperiodic which guarantees the existence of a unique stationary distribution (finite state space also implies positive recurrence, therefore this Markov chain is ergodic). However, due to the complex structure of the transition probability matrix, it is difficult to obtain a closed-form expression for the stationary distribution. An important insight here is that the fixed point for \eqref{eqn:fixed_pt_vector_non_homo} is a function of the stationary distribution for the concatenated Markov process $\widetilde{Z}_{t} :=(Z_{t},\cdots, Z_{t+L-1})$. To see this, let the corresponding stationary distribution be $\Pi$ with entries given by $\pi(a_{1},\cdots,a_{L})$ (Note that, $a_{l}$'s are defined in \eqref{eqn:tp} and the stationary vector has $2^{NL}$ entries). Then, the following holds: 
\begin{align}
\lim_{n \to \infty}P(Z_{i,t}=1) \hspace{-0.1cm} = \hspace{-0.5cm}\sum_{\substack{a_{i,l}=1\\l \in \{1,\cdots,L\}}}\hspace{-0.4cm}\pi(a_{1},\cdots,a_{L}) = \widetilde{P}^{(*,L)}_{i},
\end{align}
where $\widetilde{P}^{(*,L)}_{i}$ is the $i$th entry of the fixed point vector in \eqref{eqn:fixed_pt_vector_non_homo}. A detailed analysis of a related Markovian process has been carried out for interacting networks of finite-memory Pólya urns in \cite{SS-FA-BG:22}. Several of the corresponding results can be extended to our model by adapting similar analytical techniques.    
\end{remark}
\begin{remark}\label{rem:Friedman}(Opinion Dynamics model as an interacting network of finite memory Friedman urns).
We now discuss a special case of our model given by an interacting $N$-node network of finite memory Friedman urns. Each individual $i$ in the network is equipped with time-homogeneous memory sets $\mathcal{M}_{ji}^{(t)}$ for $j \in \{1,\cdots,N\}$ and a two-color Friedman urn (we refer the reader to~\cite{BF:49} for a description of classical Friedman urns). The memory here refers to removal of all the balls that were added to the urn at a time instant which does not belong to the corresponding memory set. To this end, we let $X_{ij,t}$ to be the ratio of red balls in urn $j$ relative to urn $i$ at time $t$, then using \eqref{eqn:ratio_X}:
\begin{align}\label{eqn:ratio}
X_{ij,t} = \frac{\rho + (\delta-\phi)\sum_{n \in \mathcal{M}_{ji}^{(t)}}Z_{j,n}+M\phi}{1+M(\delta+\phi)},\end{align}
where $\rho$ is the initial ratio of red balls in all the urns, and the urn scheme for Friedman urns is given by $
\begin{bmatrix}
    \delta & \phi\\
    \phi   & \delta
\end{bmatrix}$.
 On comparing \eqref{eqn:ratio} with \eqref{eqn:ratio_X}, we obtain $c_{1,M} = (\rho + M\phi)/(1+M(\delta+\phi))$ and $c_{2,M} = (\delta-\phi)/(1+M(\delta+\phi))$. Note that, the expressions for $c_{1,M}$ and $c_{2,M}$ here are consistent with the assumptions established for our model. Furthermore, the fixed point vector for this case is $P^{(*)} = (\rho + M\phi)/(1+2M\phi)\mathbf{1}_{N}$. Note that, taking $M\to \infty$ in this expression gives $P^{(*)}= \frac{1}{2}\mathbf{1}_{N}$, which happens to be the limiting distribution for a classical two-color Friedman urn (see \cite{DAF:65} for details). Thus the finite memory version of a Friedman urn can be used to obtain an alternate proof for asymptotics of the infinite memory case (i.e., the classical urn). However, such an insight on asymptotics is not obtained for interacting network of finite memory P\'{o}lya urns \cite{SS-FA-BG:22}, where $P^{(*)} = \rho \mathbf{1}_{N}$ for the homogeneous case. For a detailed study of asymptotic behavior of classical two-color P\'{o}lya and Friedman urns we refer the readers to \cite{BF:49,HMM:09}. 
\end{remark}
\begin{remark}(Behavioral Interpretation of Relative Bias and the Update Rule).
The notion of \emph{relative bias} $X_{ij,t}$ captures the disposition of agent~$i$ 
toward the opinion of agent~$j$ as perceived through $i$'s memory of $j$'s past expressed opinions. 
This formulation is natural in several practical settings, such as 

\begin{itemize}

\item \textit{Communication networks with per-link memory}. 
Consider a network of node exchanging binary messages (e.g., content-approval votes, congestion signals). 
Node~$i$ may store different amounts of history from different neighbors due to heterogeneous buffer 
capacities or link qualities, leading to the memory sets $\mathcal{M}_{ji}^{(t)}$ being 
indexed by the ordered pair $(j,i)$. The relative bias $X_{ij,t}$ then models node~$i$'s ``trust'' 
in the typical message from node~$j$, updated using only the messages $i$ has actually received from $j$. 
Self-influence enters only through the weight $s_{ii}$ in~(2), reflecting the degree to which agent~$i$ 
relies on its own internal state versus incoming information.

\item \textit{Social influence with selective attention.} In social networks, individuals often form impressions of others based on a recent window of observed 
posts or statements, while their own opinion at any given moment is a weighted aggregate of these 
impressions across their social contacts. The update rule~(3) formalizes this: the reinforcement 
$c_{2,M}\sum_{n \in \mathcal{M}_{ji}^{(t)}} Z_{j,n}$ captures how much the recent behavior of 
agent~$j$, as remembered by $i$, shifts $i$'s disposition toward $j$, while the drift $c_{1,M}$ 
prevents collapse to absorbing states (see Remark~\ref{rem:Markov}). The linearity of the update is a modeling 
choice that ensures analytical tractability while preserving the essential asymmetry: 
$X_{ij,t}$ and $X_{ji,t}$ evolve independently, consistent with real-world asymmetric influence.
\end{itemize}
\end{remark}

\section{Addition of Bots to the network}\label{sec:bots}
In this section, we extend our opinion dynamics model as described in section~\ref{sec:model} to accommodate the addition of bots to the network. These bots do not change their assigned bias but provide a constant shift to the connected agents in the network towards their bias. We denote $\mathcal{G}^{(\mathcal{B})}_{N}$ to be an interacting $(N+|\mathcal{B}|)$-node network integrated with $N$ individuals and a set of bots $\mathcal{B}$. We assume that every bot influences at least one regular individual in the network (i.e., the network including the bots is connected). Every bot $k \in \mathcal{B}$ has an associated strength given by $\eta_{k}\in (0,1]$, which is the constant bias of the bot. To encode the influence of bots on individuals in the network, we modify the expression for relative biases in \eqref{eqn:ratio_X} as follows:

\begin{align}\label{eqn:ratio_X_bots}
X_{ij,t}^{(\mathcal{B})} := c_{1,M} + c_{2,M}\hspace{-0.3cm}\sum_{n \in \mathcal{M}_{ji}^{(t)}}\hspace{-0.2cm}Z_{j,n}^{(\mathcal{B})} \hspace{0.2cm} \textrm{for all} \hspace{0.2cm} i \in \{1,2,\cdots,N\}, 
\end{align}
where $Z_{j,t}^{(\mathcal{B})}$ is the expressed opinion of individual $j$ in the presence of bot set $\mathcal{B}$ at time $t$. We extend the expression for expressed opinion in \eqref{eqn:draw_S} to account for the bot influence on individuals as follows:

\begin{align}\label{eqn:drawing_bots}
Z^{(\mathcal{B})}_{i,t} = \begin{cases}
    1 & \textrm{w.p.} \quad \sum\limits_{j=1}^{N}\widetilde{s}_{ij}X^{(\mathcal{B})}_{ij,t-1} + \sum\limits_{k \in \mathcal{B}}\widetilde{s}_{ik}\eta_{k}\\
    0 & \textrm{w.p.} \quad \sum\limits_{j=1}^{N}\widetilde{s}_{ij}(1-X^{(\mathcal{B})}_{ij,t-1}) + \sum\limits_{k \in \mathcal{B}}\widetilde{s}_{ik}(1-\eta_{k}), 
\end{cases}
\end{align}
for all $i \in \{1,\cdots,N\} \cup \mathcal{B}$. Here, $\widetilde{s}_{ij}$ is  the non-negative influence of $j$ on $i$, and is the $ij$th entry of the interaction matrix $\widetilde{S}^{(\mathcal{B})}_{N}$ which can be written as the following block matrix:
\begin{align}\label{eqn:S_bots}
\widetilde{S}^{(\mathcal{B})}_{N} :=\left[ 
\begin{array}{c|c} 
  \begin{array}{c} \widetilde{S}_{N}\end{array} & \widetilde{S}_{\mathcal{B}}\\ 
  \hline 
  {0}_{|\mathcal{B}|\times N} & {I}_{|\mathcal{B}| \times |\mathcal{B}|}  
\end{array} \right].
\end{align}
Here, the $N \times N$ matrix $\widetilde{S}_{N}$ consists of interaction weights among $N$ individuals of the network and the $N \times |\mathcal{B}|$ matrix $\widetilde{S}_{\mathcal{B}}$ determines the interaction weight of bots on individuals. The $2 \times 1$ block of \eqref{eqn:S_bots} is zero because individuals do not influence the opinion of bots. Similarly, $2 \times 2$ block of \eqref{eqn:S_bots} is identity due to stubbornness of the bots, i.e, the expressed opinion of a bot at any time instant only depends on its own strength. In order to ensure that $Z_{i,t}^{(\mathcal{B})}$ is a well-defined indicator function, we assume $\sum_{j=1}^{N}\widetilde{s}_{ij} + \sum_{k \in \mathcal{B}}\widetilde{s}_{ik}=1$ for all $i \in \{1,\cdots,N\}$. Furthermore, the expressed opinion for bots at any time step as given by \eqref{eqn:drawing_bots} is:
\begin{align}\label{eqn:drawing_for_bots}
Z^{(\mathcal{B})}_{k,t} = \begin{cases}
    1 & \textrm{w.p.} \quad \eta_{k}\\
    0 & \textrm{w.p.} \quad (1-\eta_{k}) 
\end{cases}
\quad \textrm{for all} \hspace{0.2cm} k \in \mathcal{B},
\end{align}
which confirms that each bot $k \in \mathcal{B}$ expresses opinions exclusively according to its own bias/strength $\eta_{k}$. For this reason, we have defined the relative biases in \eqref{eqn:ratio_X_bots} only for individuals $i \in \{1,\cdots,N\}$ and not for the bots, in other words, bots act as fixed external influence on the network.

Similar to the previous section, we obtain the corresponding discrete-time  dynamical system for the beliefs  $P^{(\mathcal{B})}_{i}(t):= P(Z^{(\mathcal{B})}_{i,t}=1)$ for all $i \in \{1,\cdots,N\}$ as follows:
\begin{align}\label{eqn:P_calc_bots}
P^{(\mathcal{B})}_{i}(t) &= E [E[Z^{(\mathcal{B})}_{i,t}|\mathcal{F}^{(\mathcal{B})}_{t-1}]] = E\bigg[\hspace{-0.09cm}\sum_{j=1}^{N}\widetilde{s}_{ij}X^{(\mathcal{B})}_{ij,t-1} \hspace{-0.1cm}+ \sum_{k \in \mathcal{B}}\widetilde{s}_{ik}\eta_{k}\hspace{-0.09cm}\bigg] = \sum_{j=1}^{N}\widetilde{s}_{ij}E\bigg[c_{1,M} + c_{2,M}\hspace{-0.3cm}\sum_{n \in \mathcal{M}_{ji}^{(t-1)}}\hspace{-0.3cm}Z^{(\mathcal{B})}_{j,n}\bigg] + \sum_{k \in \mathcal{B}}\widetilde{s}_{ik}\eta_{k} \nonumber\\
&= c_{2,M}\sum_{j=1}^{N}\sum_{n \in \mathcal{M}^{(t-1)}_{ji}}\hspace{-0.3cm}\widetilde{s}_{ij}P^{(\mathcal{B})}_{j}(n) + c_{1,M}\sum_{j=1}^{N}\widetilde{s}_{ij} + \sum_{k \in \mathcal{B}}\widetilde{s}_{ik}\eta_{k},
\end{align}
where, $\mathcal{F}^{(\mathcal{B})}_{t-1}$ is the smallest sigma-algebra defined on the random variables $\{Z^{(\mathcal{B})}_{1,n},Z^{(\mathcal{B})}_{2,n},\cdots,Z^{(\mathcal{B})}_{N,n}\}_{n \in \bigcup\limits_{j,k=1}^{N}\hspace{-0.02cm}\mathcal{M}_{jk}^{(t)}}$. To further obtain the matrix equation corresponding to \eqref{eqn:P_calc_bots}, we denote $P^{(\mathcal{B})}(t) := (P^{(\mathcal{B})}_{1}(t),\cdots, P^{(\mathcal{B})}_{N}(t))^{\mathbf{T}}$ and $\widetilde{P}_{t}^{(\mathcal{B},L)} := (P^{(\mathcal{B})}(t),\cdots,P^{(\mathcal{B})}(t-L+1))^{\mathbf{T}}$, where $L$ is the memory depth of the model as defined in \ref{sec:model}:
\begin{align}\label{eqn:ds_N_bots}
 \widetilde{P}^{(\mathcal{B},L)}_{t} =  J^{(\mathcal{B},L)}_{N}\widetilde{P}^{(\mathcal{B},L)}_{t-1} + C^{(\mathcal{B},L)}_{N},
 \end{align}
 where $J^{(\mathcal{B},L)}_{N}\in \mathbb{R}^{NL \times NL}$ and $C^{(\mathcal{B},L)}_{N}\in \mathbb{R}^{NL \times 1}$ are the following matrices:
\begin{align}\label{eqn:J_bots}
J_{N}^{(\mathcal{B},L)}= \left[ 
\begin{array}{c|c|c|c} 
  c_{2,M}\widetilde{S}_{N}\circ \mathbf{M}_{N}^{(1)}& c_{2,M}\widetilde{S}_{N}\circ \mathbf{M}_{N}^{(2)} & \cdots & c_{2,M}\widetilde{S}_{N}\circ \mathbf{M}_{N}^{(L)} \\
  \hline 
  {I}_{N\times N} & {0}_{N\times N} & \cdots & {0}_{N\times N}  \\
  \hline 
  {0}_{N\times N} & {I}_{N\times N} & \cdots & {0}_{N\times N}\\
  \hline
  \vdots & \vdots & \vdots & \vdots \\
  \hline 
  {0}_{N\times N} & {0}_{N\times N} & \cdots & {0}_{N\times N}
\end{array}\right]
\end{align}
and \begin{align}\label{eqn:C_bots}
[C^{(\mathcal{B})}_{N}]_{i\times 1}=
\begin{cases}
    c_{1,M}\sum\limits_{j=1}^{N}\widetilde{s}_{ij} + \sum\limits_{k \in \mathcal{B}}\widetilde{s}_{ik}\eta_{k}, & i \in \{1,\cdots,N\}\\\\
    0 & \textrm{otherwise.}
\end{cases}
\end{align}
We can use an argument similar to the previous section to show the existence of a unique fixed point (denoted by $\widetilde{P}^{(\mathcal{B},L,*)} = (P^{(\mathcal{B},*)},\cdots,P^{(\mathcal{B},*)})$ for \eqref{eqn:ds_N_bots}). In the previous section, we deduced that for a homogeneous network, the fixed point $\widetilde{P}^{(L,*)}$ has entries $p^{(*)}$ (see \eqref{eqn:homo_fixed_point}). To understand the influence of bots on the network, we now compute $P^{(\mathcal{B},*)}$ for different networks and look the deviation of its entries from $p^{(*)}$, i.e., how much the fixed point shifts with addition of bots. 

\begin{example}\label{eg:two_nodes_one_bot}(A complete network on two nodes with one bot as shown in Fig.~\ref{fig:two_nodes_one_bot})
\begin{figure}[h!]
\begin{center}
    \begin{tikzpicture}[->, >=stealth, node distance=2.5cm, thick]
        % Define nodes
        \node[circle, draw, minimum size=1cm] (1) {$1$};
        \node[circle, draw, minimum size=1cm, right of=1] (2) {$2$};
        \node[circle, draw, minimum size=1cm, below of=2, draw=red, fill=red!20] (B) {$B$};

        % Define directed edges with labels
        \path[->] (1) edge[bend left] node[above] {$\widetilde{s}_{21}$} (2);
        \path[->] (2) edge[bend left] node[above] {$\widetilde{s}_{12}$} (1);
       \path[->,thick, black]
    (1) edge [out=195, in=165, loop] node[left]{$\widetilde{s}_{11}$} (1);
    \path[->,thick, black]
    (2) edge [out=-10, in=20, loop] node[right]{$\widetilde{s}_{22}$} (2);

        % Red edges directed from B towards flexible nodes
        \path[->, red, thick] (B) edge[bend left] node[left, black] {$\widetilde{s}_{1B}$} (1);
        \path[->, red, thick] (B) edge[bend right] node[right, black] {$\widetilde{s}_{2B}$} (2);
        \draw[->,red](B) edge[loop below] node[below,black] {$1$} ();
    \end{tikzpicture}
\caption{A complete network $\mathcal{G}^{(B)}_{2}$ with two nodes and one bot $B$.}
\label{fig:two_nodes_one_bot}
\end{center}
\end{figure}
\begin{align}\label{eqn:fixed_pt_twonodes_onebot}
&P^{(B,*)} = \left(\begin{bmatrix}
    1 & 0\\
    0 & 1
\end{bmatrix}  - c_{2,M}M\begin{bmatrix}
    \widetilde{s}_{11} & \widetilde{s}_{12}\\
    \widetilde{s}_{21} & \widetilde{s}_{22}
\end{bmatrix} \right) ^{-1} \begin{bmatrix}
 (\widetilde{s}_{11} + \widetilde{s}_{12})c_{1,M} + \widetilde{s}_{1B}\eta_{B}\\
 (\widetilde{s}_{21} + \widetilde{s}_{22})c_{1,M} + \widetilde{s}_{2B}\eta_{B}
\end{bmatrix} \nonumber\\[2 em]
& = \frac{1}{1-c_{2,M}M(\widetilde{s}_{11}+\widetilde{s}_{22}) + c_{2,M}^{2}M^{2}(\widetilde{s}_{11}\widetilde{s}_{22}-\widetilde{s}_{21}\widetilde{s}_{12})}\begin{bmatrix}
1 -c_{2,M}M\widetilde{s}_{22} & c_{2,M}M\widetilde{s}_{12} \\
c_{2,M}M\widetilde{s}_{21} & 1-c_{2,M}M\widetilde{s}_{11}\end{bmatrix}\begin{bmatrix}
 (\widetilde{s}_{11} + \widetilde{s}_{12})c_{1,M} + \widetilde{s}_{1B}\eta_{B}\\
 (\widetilde{s}_{21} + \widetilde{s}_{22})c_{1,M}+ \widetilde{s}_{2B}\eta_{B}
\end{bmatrix} \nonumber\\
& = \begin{bmatrix}
   \frac{(1-c_{2,M}M\widetilde{s}_{22})(\widetilde{s}_{11}+\widetilde{s}_{12})c_{1,M} + (c_{2,M}M\widetilde{s}_{12})(\widetilde{s}_{21}+\widetilde{s}_{22})c_{1,M}+ \eta_{B}(\widetilde{s}_{1B}(1-c_{2,M}M\widetilde{s}_{22}) + c_{2,M}M\widetilde{s}_{12}\widetilde{s}_{2B})}{1-c_{2,M}M(\widetilde{s}_{11}+\widetilde{s}_{22}) + c_{2,M}^{2}M^{2}(\widetilde{s}_{11}\widetilde{s}_{22}-\widetilde{s}_{21}\widetilde{s}_{12})}\\\\
    \frac{(1-c_{2,M}M\widetilde{s}_{11})(\widetilde{s}_{21}+\widetilde{s}_{22})c_{1,M} + (c_{2,M}M\widetilde{s}_{21})(\widetilde{s}_{11}+\widetilde{s}_{12})c_{1,M}+ \eta_{B}(\widetilde{s}_{2B}(1-c_{2,M}M\widetilde{s}_{11}) + c_{2,M}M\widetilde{s}_{21}\widetilde{s}_{1B})}{1-c_{2,M}M(\widetilde{s}_{11}+\widetilde{s}_{22}) + c_{2,M}^{2}M^{2}(\widetilde{s}_{11}\widetilde{s}_{22}-\widetilde{s}_{21}\widetilde{s}_{12})}
\end{bmatrix}
 \end{align}
 We next study the influence of the bot on this fixed point by setting $(P^{(B,*)}_{i}-p^{(*)})>0$ for $i \in \{1,2\}$ and get a lower bound for the bot strength $\eta_{B}$. For ease of computation, we denote $D: = 1-c_{2,M}M(\widetilde{s}_{11}+\widetilde{s}_{22}) + c_{2,M}^{2}M^{2}(\widetilde{s}_{11}\widetilde{s}_{22}-\widetilde{s}_{21}\widetilde{s}_{12})$ and solve $(P^{(B,*)}_{1}-p^{(*)})>0$ to obtain the following lower bound on $\eta_{B}$ as follows:
 \begin{align}\label{eqn:eta_LBcalc}
\eta_{B} > \frac{c_{1,M}D-c_{1,M}(1-c_{2,M}M)[(1-c_{2,M}M\widetilde{s}_{22}(\widetilde{s}_{11} + \widetilde{s}_{12}) + c_{2,M}M\widetilde{s}_{12}(\widetilde{s}_{21}+\widetilde{s}_{22})]}{(1-c_{2,M}M)[\widetilde{s}_{1B}(1-c_{2,M}M\widetilde{s}_{22})+c_{2,M}M\widetilde{s}_{12}\widetilde{s}_{2B}]},    
 \end{align}
 which can be further simplified to:
 \begin{align}
   \eta_{B} > \frac{c_{1,M}}{(1-c_{2,M}M)}.  
 \end{align}
 
\noindent The same lower bound is obtained on solving $(P^{(B,*)}_{2}-p^{(*)})>0$.
\end{example}
\begin{example}\label{eg:general_network_same_bot_weight}(A general network with same bot weights on all the nodes)
The interaction matrix $\widetilde{S}_{N}^{(B)}$ for this network is given by:  

\begin{align}\label{eqn:S_one_symmetric_bot}
\widetilde{S}_{N}^{(B)} = \left[
\begin{array}{ccccc|c}
\widetilde{s}_{11} & \widetilde{s}_{12} & \widetilde{s}_{13} & \cdots & \widetilde{s}_{1N} & \alpha \\
\widetilde{s}_{21} & \widetilde{s}_{22} & \widetilde{s}_{23} & \cdots & \widetilde{s}_{2N} & \alpha\\
\widetilde{s}_{31} & \widetilde{s}_{32} & \widetilde{s}_{33} & \cdots & \widetilde{s}_{3N} & \alpha \\
\vdots & \vdots & \vdots & \ddots & \vdots & \vdots \\
\widetilde{s}_{N1} & \widetilde{s}_{N2} & \widetilde{s}_{N3} & \cdots & \widetilde{s}_{NN} & \alpha \\
\hline
0 & 0 & 0 & \cdots & 0 & 1 \\
\end{array}
\right]. 
\end{align}

We substitute $P^{(B,*)} := [P^{(B,*)}_{1},\cdots,P^{(B,*)}_{N}]^{T}$ and use \eqref{eqn:P_calc_bots} to obtain the following matrix equation:
\begin{align}\label{eqn:ds_homo_one_symmetric_bot}
P^{(B,*)} = (1-\alpha)c_{1,M}\mathbf{1}_{N} + c_{2,M}M\widetilde{S}_{N}P^{(B,*)} + \alpha\eta_{B}\mathbf{1}_{N}.   
\end{align}
Next, we subtract $\beta = \frac{(1-\alpha)c_{1,M} + \alpha\eta_{B}}{1-(1-\alpha)Mc_{2,M}}$ both sides in \eqref{eqn:ds_homo_one_symmetric_bot} as follows:
\begin{align}\label{eqn:fixed_pt_calc_one_symmetric_bot}
&P^{(B,*)} -\beta \mathbf{1}_{N} = \left[(1-\alpha)c_{1,M}+ \alpha\eta_{B} -  \frac{(1-\alpha)c_{1,M} + \alpha\eta_{B}}{1-(1-\alpha)Mc_{2,M}}\right]\mathbf{1}_{N} + c_{2,M}M\widetilde{S}_{N}P^{(B,*)}\nonumber\\[1.5 em]
&= \frac{-(1-\alpha)c_{2,M}M[(1-\alpha)c_{1,M}+\alpha\eta_{B}]\mathbf{1}_{N}}{1-(1-\alpha)Mc_{2,M}} = c_{2,M}M\widetilde{S}_{N}[P^{(B,*)}-\beta \mathbf{1}_{N}]
\end{align}
On substituting $\widetilde{P} = P^{(B,*)} - \beta \mathbf{1}_{N}$ in \eqref{eqn:fixed_pt_calc_one_symmetric_bot}, we obtain
\begin{align}\label{eqn:eig_equation}
\widetilde{S}_{N}\widetilde{P} = \frac{1}{c_{2,M}M}\widetilde{P},
\end{align}
Recall that the row sums in $\widetilde{S}_{N}$ are all given by $(1-\alpha)$. Therefore, the eigenvalues of $\widetilde{S}_{N}$ lie in the interval $(-1,1)$, which implies $\widetilde{P}=0 \iff P^{(B,*)} = \beta \mathbf{1}_{N}$. Furthermore, setting $\alpha =0$ in $P^{(B,*)}$, we obtain the fixed point $P^{(*)}$ for \eqref{eqn:P_eqn} with no bots, i.e.,  $P^{(*)} := c_{1,M}/(1-c_{2,M}M)\mathbf{1}_{N}$. We further get a lower bound on the strength of the bot $\eta_{B}$ for $P^{(B,*)}$ to be strictly greater than $P^{(*)}$:
\begin{align}\label{eqn:cond_eta_one_symmetric_bot}
\frac{(1-\alpha)c_{1,M} + \alpha\eta_{B}}{1-(1-\alpha)Mc_{2,M}} - \frac{c_{1,M}}{1- c_{2,M}M}> 0 \iff \eta_{B}> \frac{c_{1,M}}{1-c_{2,M}M}.
\end{align} 
\end{example}

\begin{example}\label{eg:complete_varying_bot}(A complete symmetric network with varying bot weights) The interaction matrix for this network is given by:  

\begin{align}\label{eqn:S_one_symmetric_bot}
\widetilde{S}_{N}^{(B)} = \left[
\begin{array}{ccccc|c}
\frac{(1-\alpha_{1})}{N}& \frac{(1-\alpha_{1})}{N} & \frac{(1-\alpha_{1})}{N} & \cdots & \frac{(1-\alpha_{1})}{N} & \alpha_{1} \\
\frac{(1-\alpha_{2})}{N} & \frac{(1-\alpha_{2})}{N} & \frac{(1-\alpha_{2})}{N} & \cdots & \frac{(1-\alpha_{2})}{N} & \alpha_{2}\\
\vdots & \vdots & \vdots & \ddots & \vdots & \vdots \\
\frac{(1-\alpha_{N})}{N} & \frac{(1-\alpha_{N})}{N} & \frac{(1-\alpha_{N})}{N} & \cdots & \frac{(1-\alpha_{N})}{N} & \alpha_{N} \\
\hline
0 & 0 & 0 & \cdots & 0 & 1 \\
\end{array}
\right]. 
\end{align}
Using \eqref{eqn:P_calc_bots}, we get the following form for the fixed point $P^{(B,*)}$ for this system:
\begin{align}\label{eqn:P_complete_varying_bots}
P^{(B,*)} = Nc_{1,M}\begin{bmatrix}
N-c_{2,M}M(1-\alpha_{1}) & -c_{2,M}M(1-\alpha_{1})& \cdots & -c_{2,M}M(1-\alpha_{1})\\
-c_{2,M}M(1-\alpha_{2}) & N-c_{2,M}M(1-\alpha_{2})& \cdots & -c_{2,M}M(1-\alpha_{2})\\
\vdots & \ddots & \vdots & \vdots \\
-c_{2,M}M(1-\alpha_{N}) & -c_{2,M}M(1-\alpha_{N})& \cdots & N-c_{2,M}M(1-\alpha_{N})
\end{bmatrix}^{-1}\begin{bmatrix}
1-\alpha_{1}\\
1-\alpha_{2}\\
\vdots\\
1-\alpha_{N}
\end{bmatrix}  
\end{align}
We can now use the Sherman-Morrison formula \cite[Section~0.8]{RAH-CRJ:12} to find the inverse of the matrix in \eqref{eqn:P_complete_varying_bots} as follows:

\vspace{0.2cm}

Set $u := \begin{bmatrix}
1-\alpha_{1}\\
1-\alpha_{2}\\
\vdots\\
1-\alpha_{N}
\end{bmatrix}; \quad \quad  v := (c_{2,M}M)^{1/2}\mathbf{1}_{N}$, then
\begin{align}\label{eqn:matrix_decom}
\begin{bmatrix}
N-c_{2,M}M(1-\alpha_{1}) & -c_{2,M}M(1-\alpha_{1})& \cdots & -c_{2,M}M(1-\alpha_{1})\\
-c_{2,M}M(1-\alpha_{2}) & N-c_{2,M}M(1-\alpha_{2})& \cdots & -c_{2,M}M(1-\alpha_{2})\\
\vdots & \ddots & \vdots & \vdots \\
-c_{2,M}M(1-\alpha_{N}) & -c_{2,M}M(1-\alpha_{N})& \cdots & N-c_{2,M}M(1-\alpha_{N})
\end{bmatrix} = NI_{N} + uv^{\mathbf{T}},
\end{align}
Hence, using Sherman-Morrison formula in \eqref{eqn:matrix_decom}, we obtain:
\begin{align}\label{eqn:Sherman-Morrison}
\begin{bmatrix}
N-c_{2,M}M(1-\alpha_{1}) & -c_{2,M}M(1-\alpha_{1})& \cdots & -c_{2,M}M(1-\alpha_{1})\\
-c_{2,M}M(1-\alpha_{2}) & N-c_{2,M}M(1-\alpha_{2})& \cdots & -c_{2,M}M(1-\alpha_{2})\\
\vdots & \ddots & \vdots & \vdots \\
-c_{2,M}M(1-\alpha_{N}) & -c_{2,M}M(1-\alpha_{N})& \cdots & N-c_{2,M}M(1-\alpha_{N})
\end{bmatrix}^{-1} = N\mathbf{1}_{N} - \frac{N\mathbf{1}_{N}uv^{\mathbf{T}}N\mathbf{1}_{N}}{1+v^{\mathbf{T}}N\mathbf{1}_{N}u}.
\end{align}

We now substitute the inverse obtained in \eqref{eqn:Sherman-Morrison} in the fixed point equation \eqref{eqn:P_complete_varying_bots}:

\begin{align}\label{eqn:fixed_pt_vector_complete_varying_bots}
P^{(B,*)} = Nc_{1,M}\left[ N\mathbf{1}_{N} - \frac{N\mathbf{1}_{N}uv^{\mathbf{T}}N\mathbf{1}_{N}}{1+v^{\mathbf{T}}N\mathbf{1}_{N}u}\right]\begin{bmatrix}
1-\alpha_{1}\\
\vdots\\
1-\alpha_{N}
\end{bmatrix},  
\end{align}
which gives
\begin{align}\label{eqn:fixed_pt_i_complete_varying_bots}
 P^{(B,*)}_{i} =  \frac{N(c_{1,M}(1-\alpha_{i})) + \eta_{B}[N\alpha_{i} + c_{2,M}M(\sum_{j\neq i}\alpha_{j} - (N-1)\alpha_{i})]}{N + c_{2,M}M(\sum_{k=1}^{N}\alpha_{k}-N)}.  
\end{align}
Similar to the previous examples, we can get a lower bound on $\eta_{B}$ by solving  $(P_{i}^{(B,*)}-p^{(*)})>0$ as follows:
\begin{align}\label{eqn:LB_complete_varying_bots_one}
&P^{(B,*)}_{i}- p^{(*)} = \frac{N(c_{1,M}(1-\alpha_{i})) + \eta_{B}[N\alpha_{i} + c_{2,M}M(\sum_{j\neq i}\alpha_{j} - (N-1)\alpha_{i})]}{N + c_{2,M}M(\sum_{k=1}^{N}\alpha_{k}-N)} - \frac{c_{1,M}}{1-c_{2,M}M}>0 \nonumber\\
&\iff \frac{N(1-c_{2,M}M)(c_{1,M}(1-\alpha_{i})) + \eta_{B}(1-c_{2}M)[N\alpha_{i} + c_{2,M}M(\sum_{j\neq i}\alpha_{j} - (N-1)\alpha_{i})]}{(N + c_{2,M}M(\sum_{k=1}^{N}\alpha_{k}-N))(1-c_{2,M}M)} \nonumber\\
& -\frac{c_{1,M}(N-c_{2,M}M(N-\sum_{k=1}^{N}\alpha_{k}))}{(N + c_{2,M}M(\sum_{k=1}^{N}\alpha_{k}-N))(1-c_{2,M}M)}>0
\end{align}
Note that the denominator in \eqref{eqn:LB_complete_varying_bots_one} is always positive, and therefore we can write:
\begin{align}\label{eqn:LB_complete_varying_bots_two}
\eta_{B}> \frac{c_{1,M}(N-c_{2,M}M(N-\sum_{k=1}^{N}\alpha_{k}))-Nc_{1,M}(1-\alpha_{i})(1-c_{2,M}M)}{(1-c_{2}M)[N\alpha_{i} + c_{2,M}M(\sum_{j\neq i}\alpha_{j} - (N-1)\alpha_{i})]} = \frac{c_{1,M}}{1-c_{2,M}M}  
\end{align}
\end{example}

To understand the influence of bots on the network, we now compute $P^{(\mathcal{B},*)}$ for a special class of networks, known as \textit{The Hub-and-Spoke model}, which consists of a central node called the \emph{hub} which has connections to all the other nodes (\emph{spokes}). The spokes are not connected to each other. Such networks have variety of applications, primarily in healthcare \cite{EF:17, V:23} and air transport \cite{SSD:99,OK:98}. In the next two examples, the interacting networks consist of one central node (called the ``Hub'') that affects every other node (``Spokes'') in the network (including itself). We analyze two scenarios of bot attachment for this network: \textit{(i)} when the bot attaches only to the hub. \textit{(ii)} when the bot attaches to every node in the network. 
\begin{example}\label{eg:HS_bot_tohub}(hub-and-spoke model with bot influencing only the hub as shown in Fig.~\ref{fig:hub_spoke_bot_tohub}) The interaction matrix for this network is given by: 
\begin{align}\label{eqn:S_one_symmetric_bot}
\widetilde{S}_{N}^{(B)} = \left[
\begin{array}{ccccc|c}
\widetilde{s}_{11} & 0 & 0 & \cdots & 0 & \widetilde{s}_{1B} \\
1 & 0 & 0 & \cdots & 0 & 0\\
\vdots & \vdots & \vdots & \ddots & \vdots & \vdots \\
1 & 0 & 0 & \cdots & 0 & 0 \\
\hline
0 & 0 & 0 & \cdots & 0 & 1 \\
\end{array}
\right]. 
\end{align}
\begin{figure}[h!]
\begin{center}
\begin{tikzpicture}[
    transform shape,
    ->,
    >=stealth,
    node distance=2.0cm,
    thick, scale=0.7
]
% Define central node (vertex 1)
    \node[circle, draw, minimum size=1cm] (v1) {$1$};
    % Draw self-loop at vertex 1
    \draw[->] (v1) edge [loop above] node[above] {\(\widetilde{s}_{11}\)} (v1);

    % Define positions for outer vertices in a circular pattern
    \def\radius{3cm}  % Radius of circular arrangement
    \def\angleStep{60} % Angle step between vertices

    % Place first vertex (2)
    \node[circle, draw, minimum size=1cm] 
          (v2) at ({\radius*cos(\angleStep*2)}, {\radius*sin(\angleStep*2)}) {$2$};
    \draw[->] (v1) -- (v2) node[midway, above] {$1$};

    % Place second vertex (3)
    \node[circle, draw, minimum size=1cm] 
          (v3) at ({\radius*cos(\angleStep*3)}, {\radius*sin(\angleStep*3)}) {$3$};
    \draw[->] (v1) -- (v3) node[midway, above] {$1$};

    % Place dots in a circular pattern
    \node[circle, fill, inner sep=1pt] at ({\radius*cos(\angleStep*3.5)}, {\radius*sin(\angleStep*3.5)}) {};

    \node[circle, fill, inner sep=1pt] at ({\radius*cos(\angleStep*4.0)}, {\radius*sin(\angleStep*4.0)}){};

     \node[circle, fill, inner sep=1pt] at ({\radius*cos(\angleStep*4.5)}, {\radius*sin(\angleStep*4.5)}){};

    % Place last vertex (N)
    \node[circle, draw, minimum size=1cm] 
          (vN) at ({\radius*cos(\angleStep*5)}, {\radius*sin(\angleStep*5)}) {$N$};
    \draw[->] (v1) -- (vN) node[midway, above] {$1$};

    % Define special red vertex B
    \node[circle, draw=red,fill=red!20, minimum size=1cm, right=4cm of v1] (B) {$B$};

    % Draw red edge from B to 1 with weight s_{1B}
    \draw[->, red] (B) -- (v1) node[midway, above] {\(\widetilde{s}_{1B}\)};
    \draw[->,red] (B) edge[loop below] node[below,text=black] {$1$} ();    
\end{tikzpicture}
 \caption{An $N$-node hub-and-spoke network in which node $1$ is the hub and remaining $N-1$ individuals are spokes. There is a single bot, $B$, present in the network that influences the opinion of the hub.}
\label{fig:hub_spoke_bot_tohub}
\end{center}
\end{figure}
Due to symmetry of the network, the fixed point for \eqref{eqn:P_calc_bots} is of the form $[\beta_{H,M},\beta_{S,M},\cdots, \beta_{S,M}]^{T}$, where $\beta_{H,M}$ and $\beta_{S,M}$ are the fixed points for the hub-and-spokes respectively. Using \eqref{eqn:P_calc_bots}, we obtain the expression for $\beta_{H,M}$:
\begin{align}\label{eqn:fixed_pt_hub_bot_tohub_M}
\beta_{H,M} =\widetilde{s}_{11}c_{1,M} + \widetilde{s}_{11}c_{2,M}M\beta_{H,M} + \widetilde{s}_{1B}\eta_{B} \iff \beta_{H,M} = \frac{\widetilde{s}_{11}c_{1,M} + \widetilde{s}_{1B}\eta_{B}}{1-\widetilde{s}_{11}c_{2,M}M}
\end{align}
We next obtain the expression for $\beta_{S,M}$, using~\eqref{eqn:P_calc_bots} and~\eqref{eqn:fixed_pt_hub_bot_tohub_M} as follows:
\begin{align}\label{eqn:fixed_pt_spoke_bot_tohub}
\beta_{S,M} = c_{2,M}M\beta_{H,M} + c_{1,M} \iff \beta_{S,M} = \frac{c_{1,M} + \widetilde{s}_{1,B}\eta_{B}c_{2,M}M}{1-\widetilde{s}_{11}c_{2,M}M}
\end{align}
\end{example}
\begin{example}\label{eg:HS_bot_toall}(hub-and-spoke model with bot influencing all the nodes as shown in Fig.~\ref{fig:hub_spoke_bot_toall}) 
\begin{figure}[h!]
\begin{center}
\begin{tikzpicture}[
    transform shape,
    ->,
    >=stealth,
    node distance=2.0cm,
    thick, scale=0.7
]
% Define central node (vertex 1)
    \node[circle, draw, minimum size=1cm] (v1) {$1$};
    % Draw self-loop at vertex 1
    \draw[->] (v1) edge [loop right] node[above,right] {\(\widetilde{s}_{11}\)} (v1);

    % Define positions for outer vertices in a circular pattern
    \def\radius{3cm}  % Radius of circular arrangement
    \def\angleStep{60} % Angle step between vertices

    % Place first vertex (2)
    \node[circle, draw, minimum size=1cm] 
          (v2) at ({\radius*cos(\angleStep*2)}, {\radius*sin(\angleStep*2)}) {$2$};
    \draw[->] (v1) -- (v2) node[midway, above,right] {$\widetilde{s}_{21}$};

    % Place second vertex (3)
    \node[circle, draw, minimum size=1cm] 
          (v3) at ({\radius*cos(\angleStep*3)}, {\radius*sin(\angleStep*3)}) {$3$};
    \draw[->] (v1) -- (v3) node[midway, above] {$\widetilde{s}_{31}$};

    % Place dots in a circular pattern
    \node[circle, fill, inner sep=1pt] at ({\radius*cos(\angleStep*3.5)}, {\radius*sin(\angleStep*3.5)}) {};

    \node[circle, fill, inner sep=1pt] at ({\radius*cos(\angleStep*4.0)}, {\radius*sin(\angleStep*4.0)}){};

     \node[circle, fill, inner sep=1pt] at ({\radius*cos(\angleStep*4.5)}, {\radius*sin(\angleStep*4.5)}){};

    % Place last vertex (N)
    \node[circle, draw, minimum size=1cm] 
          (vN) at ({\radius*cos(\angleStep*5)}, {\radius*sin(\angleStep*5)}) {$N$};
    \draw[->] (v1) -- (vN) node[midway,above,right] {$\widetilde{s}_{N1}$};

    % Define special red vertex B
    \node[circle, draw=red,fill=red!20, minimum size=1cm, left=2cm of v3] (B) {$B$};
    \draw[->,red] 
(B) edge[loop left] node[left,text=black] {$1$} ();

    % Draw red edge from B to 1,2,3 with respective weights
    \draw[->, red] (B) to[bend left] node[above=0.2,right] {\(\widetilde{s}_{1B}\)} (v1);
    \draw[->, red] (B) to  node[midway, above] {\(\widetilde{s}_{2B}\)} (v2);
    \draw[->, red] (B) to node[midway, above] {\(\widetilde{s}_{3B}\)} (v3);
    \draw[->, red] (B) to node[midway, above] {\(\widetilde{s}_{NB}\)} (vN);
    
\end{tikzpicture}
\caption{hub-and-spoke network with $N$ flexible nodes such that $1$ is the hub and remaining $N-1$ individuals are spokes. There is a single bot $B$ present in the network, influencing all the nodes in the network.}
\label{fig:hub_spoke_bot_toall}
\end{center}
\end{figure}
The interaction matrix for this network is given by:
\begin{align}\label{eqn:S_hub_spoke_bot_toall}
\widetilde{S}^{(B)}_{N} = \left[
\begin{array}{ccccc|c}
\widetilde{s}_{11} & 0 & 0 & \cdots & 0 & \widetilde{s}_{1B} \\
\widetilde{s}_{21} & 0 & 0 & \cdots & 0 & \widetilde{s}_{2B}\\
\widetilde{s}_{31} & 0 & 0 & \cdots & 0 & \widetilde{s}_{3B} \\
\vdots & \vdots & \vdots & \ddots & \vdots & \vdots \\
\widetilde{s}_{N1} & 0 & 0 & \cdots & 0 & \widetilde{s}_{NB} \\
\hline
0 & 0 & 0 & \cdots & 0 & 1 \\
\end{array}
\right].    
\end{align}
Similar to the previous examples, we use \eqref{eqn:P_calc_bots} to compute the fixed point $P^{(B,*)}$ as follows:
\begin{align}\label{eqn:fixed_pt_hub_spoke_bot_toall}
P^{(B,*)} = \begin{bmatrix}
1-c_{2,M}M\widetilde{s}_{11} & 0 & \cdots & 0\\
-c_{2,M}M\widetilde{s}_{21} & 1 & \cdots & 0\\
\vdots & \vdots & \ddots & \vdots \\
-c_{2,M}M\widetilde{s}_{N1} & 0 & \cdots & 1
\end{bmatrix}^{-1}\begin{bmatrix}
c_{1,M}\widetilde{s}_{11} + \widetilde{s}_{1B}\eta_{B}\\
c_{1,M}\widetilde{s}_{21} + \widetilde{s}_{2B}\eta_{B}\\
\vdots\\
c_{1,M}\widetilde{s}_{N1} + \widetilde{s}_{NB}\eta_{B}
\end{bmatrix} = \begin{bmatrix}
\frac{c_{1,M}\widetilde{s}_{11} + \widetilde{s}_{1B}\eta_{B}}{1-c_{2,M}M\widetilde{s}_{11}}\\\\
\frac{c_{1,M}\widetilde{s}_{21} + \eta_{B}[\widetilde{s}_{2B} + c_{2,M}M(\widetilde{s}_{21}\widetilde{s}_{1B} -\widetilde{s}_{11}\widetilde{s}_{2B})]}{1-c_{2,M}M\widetilde{s}_{11}}\\\\
\vdots\\\\
\frac{c_{1,M}\widetilde{s}_{N1} + \eta_{B}[\widetilde{s}_{NB} + c_{2,M}M(\widetilde{s}_{N1}\widetilde{s}_{1B} -\widetilde{s}_{11}\widetilde{s}_{NB})]}{1-c_{2,M}M\widetilde{s}_{11}}
\end{bmatrix}
\end{align}  
\end{example}
Note that the fixed point of hub is the same in the two preceding examples. To compare the fixed points of spokes, we denote $i$th entry of \eqref{eqn:fixed_pt_hub_spoke_bot_toall} by $\beta_{S,M}^{(i)}$ and solve for $(\beta_{S,M} - \beta_{S,M}^{(i)})>0$, where $\beta_{S,M}$ is the fixed point for spoke obtained in \eqref{eqn:fixed_pt_spoke_bot_tohub} :
\begin{align}\label{eqn:fixed_pt_compare_hubspoke}
(\beta_{S,M} - \beta_{S,M}^{(i)}) 
> 0 \iff \widetilde{s}_{iB}[c_{1,M} + \eta_{B}(c_{2,M}M-1)]>0 \iff \eta_{B} < \frac{c_{1,M}}{(1-c_{2,M}M)} = p^{(*)}.
\end{align}
A key observation in the previous two examples is that the fixed point for the hub is same in  examples~\ref{eg:HS_bot_tohub} and~\ref{eg:HS_bot_toall}, while it can be easily checked that the fixed point of each spoke in Example~\ref{eg:HS_bot_tohub} is greater than or equal to its fixed point in example~\ref{eg:HS_bot_toall}. From~\eqref{eqn:fixed_pt_compare_hubspoke}, $(\beta_{S,M} - \beta_{S,M}^{(i)}) > 0$ if and only if $\eta_B < p^{(*)}$, and 
$(\beta_{S,M} - \beta_{S,M}^{(i)}) < 0$ if and only if $\eta_B > p^{(*)}$. Therefore, a comparison 
between the two bot-attachment strategies (hub-only vs.\ all-nodes) depends critically on whether 
the bot strength exceeds the network fixed point:

\begin{itemize}
    \item If $\eta_B < p^{(*)}$ (bot biased toward opinion~$0$): attaching the bot to all nodes results in 
    a \emph{lower} fixed-point belief for spokes than attaching it only to the hub. 
    This is because the direct negative reinforcement from the bot reaches each spoke without 
    attenuation through the hub.
    \item If $\eta_B > p^{(*)}$ (bot biased toward opinion~$1$): attaching the bot to all nodes results in 
    a \emph{higher} fixed-point belief for spokes than attaching it only to the hub. 
    The direct positive reinforcement is again stronger than the attenuated effect via the hub.
\end{itemize}
 Furthermore, we leave it to the readers to check that $(P_{i}^{(B,*)}-p^{(*)})>0 \iff \eta_{B}>c_{1,M}/(1-c_{2,M}M)$ in Examples~\ref{eg:HS_bot_tohub} and~\ref{eg:HS_bot_toall}. A key result obtained in all the above examples is that setting $(P^{(B,*)}-p^{(*)})>0$ gives the same lower bound for the strength of the bot attached. Indeed, we now show in the following theorem that in the presence of a single bot $B$, this lower bound is obtained for $\eta_{B}$ irrespective of the network structure.

\begin{theorem}\label{thm:lower_bound_one_bot}
Given an interacting network of $N$ individuals $\mathcal{G}_{N}^{(\mathcal{B})}$ equipped with the interaction matrix~\eqref{eqn:S_bots}, $c_{2,M}\in (0,1/M)$ in \eqref{eqn:ratio_X_bots} and memory sets $\mathcal{M}^{(t)}_{ji} = \{t-k_{ji}^{(1)},\cdots, t-k_{ji}^{(M)}\}$ for all $i,j \in \{1,\cdots,N\}$ and a bot set $\mathcal{B}= \{B\}$, the following holds:
\begin{align}\label{eqn:eta_lowerbound}(P^{(B,*)}-P^{(*)})\succ \mathbf{0}_{N} \iff \eta_{B} > p^{(*)}.
\end{align} 
Furthermore,
\begin{align}\label{eqn:eta_equals_fixedpt}(P^{(B,*)}-P^{(*)})= \mathbf{0}_{N} \iff \eta_{B} = p^{(*)}.
\end{align} 
\end{theorem}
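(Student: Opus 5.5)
We reduce the fixed point $P^{(B,*)}$ to an $N$-dimensional linear system, exactly as in Examples~\ref{eg:two_nodes_one_bot}--\ref{eg:HS_bot_toall}, and then study the deviation from $P^{(*)}$ with a positivity argument on a Neumann series. Taking $t\to\infty$ in \eqref{eqn:P_calc_bots}, and using that each memory set has exactly $M$ elements, the stationary vector satisfies
\begin{align*}
P^{(B,*)} = c_{2,M}M\widetilde{S}_{N}P^{(B,*)} + c_{1,M}\widetilde{S}_{N}\mathbf{1}_{N} + \eta_{B}\widetilde{S}_{\mathcal{B}},
\end{align*}
where $\widetilde{S}_{\mathcal{B}}\in\mathbb{R}^{N}$ is the single bot column. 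Row-stochasticity of \eqref{eqn:S_bots} gives $\widetilde{S}_{N}\mathbf{1}_{N} = \mathbf{1}_{N}-\widetilde{S}_{\mathcal{B}}$. Existence and uniqueness of this fixed point follow from the argument of Lemma~\ref{lem:spectral_radius} and Theorem~\ref{thm:fixed_pt}: the row sums of $c_{2,M}M\widetilde{S}_{N}$ are at most $c_{2,M}M<1$, so $I-c_{2,M}M\widetilde{S}_{N}$ is invertible.

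\textbf{Step 1 (deviation equation).} Set $D:=P^{(B,*)}-p^{(*)}\mathbf{1}_{N}$ and use $p^{(*)}=c_{1,M}+c_{2,M}Mp^{(*)}$. Substituting $P^{(B,*)}=D+p^{(*)}\mathbf{1}_{N}$ gives
\begin{align*}
D = c_{2,M}M\widetilde{S}_{N}D + \big(c_{2,M}Mp^{(*)}+c_{1,M}\big)\big(\mathbf{1}_{N}-\widetilde{S}_{\mathcal{B}}\big) + \eta_{B}\widetilde{S}_{\mathcal{B}} - p^{(*)}\mathbf{1}_{N}.
\end{align*}
Since $c_{2,M}Mp^{(*)}+c_{1,M}=p^{(*)}$, this collapses to
\begin{align*}
(I_{N\times N}-c_{2,M}M\widetilde{S}_{N})D = (\eta_{B}-p^{(*)})\widetilde{S}_{\mathcal{B}}.
\end{align*}
The entire statement therefore reduces to understanding the sign pattern of $(I-c_{2,M}M\widetilde{S}_{N})^{-1}\widetilde{S}_{\mathcal{B}}$.

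\textbf{Step 2 (positivity).} Because $c_{2,M}\in(0,1/M)$ and $\widetilde{S}_{N}\ge 0$ has row sums at most $1$, the Neumann series
\begin{align*}
(I-c_{2,M}M\widetilde{S}_{N})^{-1}=\sum_{k\ge0}(c_{2,M}M)^{k}\widetilde{S}_{N}^{k}
\end{align*}
converges and is entrywise nonnegative. Hence $w:=(I-c_{2,M}M\widetilde{S}_{N})^{-1}\widetilde{S}_{\mathcal{B}}\succeq \mathbf{0}_{N}$. For strict positivity, note that $w_i>0$ exactly when there is a directed walk in $\widetilde{S}_{N}$ from $i$ to some node $j$ with $\widetilde{s}_{jB}>0$. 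This holds for every $i$ by the standing assumption that the network including the bot is connected, read as: every individual is reachable from the bot. This is where positivity of $c_{2,M}$ matters, since every term of the series then has a nonnegative coefficient.

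\textbf{Step 3 (conclusion).} We have $D=(\eta_{B}-p^{(*)})w$ with $w\succ\mathbf{0}_{N}$. Therefore $D\succ\mathbf{0}_{N}$ if and only if $\eta_{B}>p^{(*)}$, and $D=\mathbf{0}_{N}$ if and only if $\eta_{B}=p^{(*)}$; as a by-product, $D\prec\mathbf{0}_{N}$ if and only if $\eta_{B}<p^{(*)}$. The main obstacle is Step 2's strictness, which requires making the connectivity hypothesis precise as bot-reachability of every agent. Without it, agents unreachable from the bot have $D_i=0$ and the strict inequality in \eqref{eqn:eta_lowerbound} can fail. The remaining steps are routine algebra.
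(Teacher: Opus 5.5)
Your proposal is correct and follows essentially the same route as the paper. Both arguments reduce to $P^{(B,*)}-P^{(*)}=(\eta_B-p^{(*)})(I_{N\times N}-c_{2,M}M\widetilde{S}_N)^{-1}\widetilde{S}_{\mathcal{B}}$ and conclude from nonnegativity of the Neumann series. The one structural difference is that you center directly at $p^{(*)}\mathbf{1}_N$, whereas the paper subtracts an auxiliary bot-free network $S$ with the same support; that auxiliary network cannot exist when some agent listens only to the bot, so your centering is slightly more general.

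Your Step 2 is sharper than the paper's. The paper deduces $(I_{N\times N}-c_{2,M}M\widetilde{S}_N)^{-1}\widetilde{S}_{\mathcal{B}}\succ\mathbf{0}_N$ only from each row of the inverse having a positive entry. As you point out, strict positivity actually requires every agent to be reachable from the bot.
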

\textit{Proof:} For the given network, we can rewrite \eqref{eqn:P_calc_bots} as follows: 
\begin{align}\label{eqn:P_calc_bots_one}
P_{i}^{(B)}(t) = c_{1,M}\sum_{j=1}^{N}\widetilde{s}_{ij} + \widetilde{s}_{iB}\eta_{B}+ c_{2,M}\sum_{j=1}^{N}\sum_{
n\in \mathcal{M}_{ji}^{(t)}}\hspace{-0.2cm}P_{j}^{(B)}(t-n).   
\end{align}
Corresponding to $\mathcal{G}_{N}^{(B)}$, we consider a homogeneous network $\mathcal{G}_{N}$ without bots and equipped with the memory sets $\mathcal{M}^{(t)}_{ji} = \{t-k_{ji}^{(1)},\cdots, t-k_{ji}^{(M)}\}$ for all $i,j \in \{1,\cdots,N\}$. We further assume the following two conditions on its interaction matrix $S = [s_{ij}]_{i,j=1}^{N}$:
\begin{itemize}
\item[(1)] ${s}_{ij}=0 \iff \widetilde{s}_{ij}=0$,
\item[(2)] $\widetilde{s}_{ij} \leq s_{ij}$ for all $i,j \in \{1,\cdots,N\}$.
\end{itemize}

\noindent Note that the above two conditions ensure that $\widetilde{s}_{i,B}= \sum_{j=1}^{N}(s_{ij}-\widetilde{s}_{ij})$ as well as the same graph structure for $\mathcal{G}_{N}^{(B)}$ and $\mathcal{G}_{N}$. Setting $Y_{i}^{(B)}(t) := (P_{i}^{(B)}(t) - P_{i}(t))$ and using \eqref{eqn:P_calc} and \eqref{eqn:P_calc_bots_one}, we obtain the following:
\begin{align}\label{eqn:Y_calc}
P_{i}^{(B)}(t) - P_{i}(t) = c_{1,M}(\sum_{j=1}^{N}\widetilde{s}_{ij}-1) + \widetilde{s}_{iB}\eta_{B}+ c_{2,M} \sum_{j=1}^{N}\big[\hspace{-0.25cm}\sum_{n \in \{k_{ji}^{(1)},\cdots,k_{ji}^{(M)}\}}\hspace{-0.8cm}\widetilde{s}_{ij}(P_{j}^{(B)}(t-n) - P_{j}(t-n))\big].
\end{align}
We now denote $Y_{i}^{(B,*)}= \lim\limits_{t \to \infty}(P_{i}^{(B)}(t)-P_{i}(t))$, and take $t \to \infty$ in \eqref{eqn:Y_calc}:
\begin{align}\label{eqn:Y_limit_calc_one}
Y_{i}^{(B,*)} \hspace{-0.1cm}= \hspace{-0.1cm}\widetilde{s}_{iB}(\eta_{B}-c_{1,M}) + c_{2,M}M\sum_{j=1}^{N}\big[\widetilde{s}_{ij}P_{j}^{(B,*)}-s_{ij}p^{(*)}\big].  
\end{align}
Adding and subtracting $c_{2,M}M(\sum_{j=1}^{N}\widetilde{s}_{ij}p^{(*)})$ to \eqref{eqn:Y_limit_calc_one}, we obtain:
\begin{align}\label{eqn:Y_limit_calc_two}
Y_{i}^{(B,*)} &= \widetilde{s}_{iB}(\eta_{B}- c_{1,M}) + c_{2,M}M \sum_{j=1}^{N}\big[\widetilde{s}_{ij}Y_{j}^{(B,*)} +(\widetilde{s}_{ij}-s_{ij})p^{(*)}\big]\nonumber\\
 &= \widetilde{s}_{iB}\bigg(\eta_{B}-c_{1,M}-\frac{c_{1,M}c_{2,M}M}{1-Mc_{2,M}}\bigg) + c_{2,M}M\sum_{j=1}^{N}\widetilde{s}_{ij}Y_{j}^{(B,*)},
\end{align}
where we have substituted $p^{(*)} = c_{1,M}/(1-Mc_{2,M})$ for all $j \in \{1,\cdots,N\}$, and $\sum_{j=1}^{N}\widetilde{s}_{iB} = \sum_{j=1}^{N}(s_{ij}-\widetilde{s}_{ij})$. To write the matrix form for \eqref{eqn:Y_limit_calc_two}, we let $Y^{(B,*)} := [Y_{1}^{(B,*)},\cdots,Y_{N}^{(B,*)}]^{\mathbf{T}}$ to obtain:
\begin{align}\label{eqn:Y_matrix}
Y^{(B,*)} = D_{N}^{(B)}\bigg(\frac{\eta_{B}(1-Mc_{2,M})-c_{1,M}}{1-Mc_{2,M}}\bigg)\mathbf{1}_{N} + c_{2,M}M\widetilde{S}_{N}Y^{(B,*)},     
\end{align}
where $D_{N}^{(B)}$ is a diagonal matrix with $[D]_{ii} = \widetilde{s}_{iB}$. Solving for $Y^{(B,*)}$ in \eqref{eqn:Y_matrix}, we get
\begin{align}\label{eqn:Y_formula}
Y^{(B,*)}= \frac{(\eta_{B}(1-Mc_{2,M})-c_{1,M})(I_{N\times N}-c_{2,M}M\widetilde{S}_{N})^{-1}D_{N}^{(B)}\mathbf{1}_{N}}{1-Mc_{2,M}}.
\end{align}
Note that in \eqref{eqn:Y_formula} the matrix $c_{2,M}M\widetilde{S}_{N}$ has spectral radius strictly less than one, and hence $(I_{N\times N}-c_{2,M}M\widetilde{S}_{N})$ is an invertible matrix. Furthermore, writing the corresponding Neumann series expansion for $(I_{N\times N}-c_{2,M}M\widetilde{S}_{N})^{-1}$, we obtain that all the entries of this inverse are non-negative and at least one entry in each row of $(I_{N\times N}-c_{2,M}M\widetilde{S}_{N})^{-1}$ is positive. Therefore, $(I_{N\times N}-c_{2,M}M\widetilde{S}_{N})^{-1}D_{N}^{(B)}\mathbf{1}_{N} =(I_{N\times N}-c_{2,M}M\widetilde{S}_{N})^{-1}(\widetilde{s}_{1,B},\cdots,\widetilde{s}_{N,B})^{\mathbf{T}}\succ \mathbf{0}_{N}$ which gives (using \eqref{eqn:Y_formula}) $Y^{(B,*)}\succ \mathbf{0}_{N} \iff \eta_{B}> p^{(*)}$ and $Y^{(B,*)}=\mathbf{0}_{N} \iff \eta_{B}= p^{(*)}$.
$\null\nobreak\hfill\ensuremath{\square}$

For the one-bot setting, Theorem~\ref{thm:lower_bound_one_bot} indicates that in order to perturb the a homogeneous system (without bots) towards a bot favored opinion, the added bot should have a strength greater than $p^{(*)}$. Furthermore, the fixed point does not shift when bot strength exactly equals the entry of the fixed point vector (i.e., $\eta_{B} = p^{(*)}$). In the latter case, the bot behaves like a typical individual which has reached the fixed point in a homogeneous network and has no effect on the asymptotics of the network. In the next example, we analyze the dynamics of a two-node network equipped with two bots of varying strengths. 
\begin{example}\label{eg:two_nodes_two_bots}(A general network on two nodes equipped with two bots).
\begin{figure}[h!]
\begin{center}
\begin{tikzpicture}[
    transform shape,
    ->,
    >=stealth,
    node distance=2.0cm,
    thick
]

% Nodes
\node[circle, draw, minimum size=1cm] (1) {$1$};
\node[circle, draw, minimum size=1cm, right of=1] (2) {$2$};

\node[circle, draw, fill=red!20, draw=red, minimum size=1cm, below left=1.8cm and 1.5cm of 1] (B1) {$B_1$};
\node[circle, draw, fill=red!20, draw=red, minimum size=1cm, below right=1.8cm and 1.5cm of 2] (B2) {$B_2$};

% Black edges (typical nodes)
\path (1) edge[bend left] node[above] {$\widetilde{s}_{21}$} (2);
\path (2) edge[bend left] node[below] {$\widetilde{s}_{12}$} (1);

\path (1) edge[loop left] node {$\widetilde{s}_{11}$} (1);
\path (2) edge[loop right] node {$\widetilde{s}_{22}$} (2);

% Red edges from bots
\path[red]
(B1) edge[bend left=15] node[left] {$\widetilde{s}_{1B_1}$} (1)
(B1) edge[bend right=25] node[below=3pt] {$\widetilde{s}_{2B_1}$} (2);

\path[red]
(B2) edge[bend right=15] node[right] {$\widetilde{s}_{2B_2}$} (2)
(B2) edge[bend left=25] node[below=3pt] {$\widetilde{s}_{1B_2}$} (1);

\end{tikzpicture}

\caption{A two-node directed network $\mathcal{G}^{(B)}_{2}$ with two bots $B_1$ and $B_2$.}
\label{fig:two_nodes_two_bots}
\end{center}
\end{figure}
\begin{align}
P^{(\mathcal{B},*)}
= \left(
\begin{bmatrix}
1 & 0 \\
0 & 1
\end{bmatrix}
-
c_{2,M} M
\begin{bmatrix}
\widetilde{s}_{11} & \widetilde{s}_{12} \\
\widetilde{s}_{21} & \widetilde{s}_{22}
\end{bmatrix}
\right)^{-1} \begin{bmatrix}
(\widetilde{s}_{11}+\widetilde{s}_{12})c_{1,M}
+ \widetilde{s}_{1,B_1}\eta_{B_1}
+ \widetilde{s}_{1,B_2}\eta_{B_2}
\\
(\widetilde{s}_{21}+\widetilde{s}_{22})c_{1,M}
+ \widetilde{s}_{2,B_1}\eta_{B_1}
+ \widetilde{s}_{2,B_2}\eta_{B_2}
\end{bmatrix},
\end{align}
 To obtain a correlation between strength of these two bots, we set $(P^{(\mathcal{B},*)}-P^{(*)})=\mathbf{0}_{2}$. This happens when the effect of one bot counters the other. Due to space limitations, we omit the calculations and only write the final expression obtained by setting $(P_{1}^{(\mathcal{B},*)}-P_{1}^{(*)})=0$:
\begin{align}\label{eqn:two_bots_compare_one}
C^{(1)}_{B_{1}}\Big[\eta_{B_{1}}-p^{(*)}\Big] + C^{(1)}_{B_{2}}\Big[\eta_{B_{2}}-p^{(*)}\Big]  = 0 
\end{align}
%\frac{c_{1,M}c_{2,M}^{2}M^{2}\widetilde{s}_{12}\widetilde{s}_{22}}{1-c_{2,M}M}

where, $C^{(1)}_{B_{1}} = \widetilde{s}_{1B_{1}}(1-c_{2,M}M\widetilde{s}_{22}) + c_{2,M}M\widetilde{s}_{12}\widetilde{s}_{2B_{1}}$ and $C^{(1)}_{B_{2}} = \widetilde{s}_{1B_{2}}(1-c_{2,M}M\widetilde{s}_{22}) + c_{2,M}M\widetilde{s}_{12}\widetilde{s}_{2B_{2}}$. Similarly, setting $(P_{2}^{(B,*)}-P_{2}^{(*)})=0$, we obtain: 
\begin{align}\label{eqn:two_bots_compare_two}
C^{(2)}_{B_1}
\left[
\eta_{B_1}
-
p^{(*)}
\right]
+
C^{(2)}_{B_2}
\left[
\eta_{B_2}
-
p^{(*)}
\right]
= 0,
\end{align}
where, $
C^{(2)}_{B_1}
=
\widetilde{s}_{2B_1}(1-c_{2,M}M\widetilde{s}_{11})
+
c_{2,M}M\,\widetilde{s}_{21}\widetilde{s}_{1B_1}$ and $C^{(2)}_{B_2}
=
\widetilde{s}_{2B_2}(1-c_{2,M}M\widetilde{s}_{11})
+
c_{2,M}M\,\widetilde{s}_{21}\widetilde{s}_{1B_2}$. Note that, setting $\eta_{B_{1}}=\eta_{B_{2}}= p^{(*)}$ satisfy both \eqref{eqn:two_bots_compare_one} and \eqref{eqn:two_bots_compare_two}, i.e., $P^{(\mathcal{B},*)}= P^{(*)}$. Similar to the one bot scenario, both the bots in this case behave like typical individuals who have reached fixed point and do not perturb the asymptotics of the network. The interesting cases to study for \eqref{eqn:two_bots_compare_one} and \eqref{eqn:two_bots_compare_two} being satisfied are the ones with $\eta_{B_{1}}\neq p^{(*)}$ and $\eta_{B_{2}}\neq p^{(*)}$. In these cases, one bot nullifies the effect of another and therefore $P^{(\mathcal{B},*)} = P^{(*)}$. To this end, we write a combined matrix equation for \eqref{eqn:two_bots_compare_one} and \eqref{eqn:two_bots_compare_two}:
\begin{align}\label{eqn:two_bots_compare_matrix}
\begin{bmatrix}
C_{B_{1}}^{(1)} && C_{B_{2}}^{(1)} \\\\
C_{B_{1}}^{(2)} & &C_{B_{2}}^{(2)}
\end{bmatrix}\begin{bmatrix}
\eta_{B_{1}}-p^{(*)}\\\\
\eta_{B_{2}}-p^{(*)}
\end{bmatrix}  = \begin{bmatrix}
    0 \\\\
    0
\end{bmatrix}.
\end{align}

One can easily show that the determinant of the $2\times 2$ matrix in \eqref{eqn:two_bots_compare_matrix} is given by:
\begin{align}\label{eqn:det_C}\det &\begin{bmatrix}
C_{B_{1}}^{(1)} && C_{B_{2}}^{(1)} \\\\
C_{B_{1}}^{(2)} & &C_{B_{2}}^{(2)}
\end{bmatrix} =[\widetilde{s}_{1B_{1}}\widetilde{s}_{2B_{2}}-\widetilde{s}_{1B_{2}}\widetilde{s}_{2B_{1}}][1-c_{2,M}M(\widetilde{s}_{11}+\widetilde{s}_{22})+c_{2,M}^{2}M^{2}[\widetilde{s}_{11}\widetilde{s}_{22}+\widetilde{s}_{12}\widetilde{s}_{21}]].
\end{align}
A non-trivial solution for \eqref{eqn:two_bots_compare_matrix} (i.e., $\eta_{B_{1}}\neq p^{(*)}$ and $\eta_{B_{2}}\neq p^{(*)}$) exists if and only if the determinant in \eqref{eqn:det_C} is zero, which occurs if and only if  $\widetilde{s}_{1B_{1}}\widetilde{s}_{2B_{2}} = \widetilde{s}_{1B_{2}}\widetilde{s}_{2B_{1}}  $. Assuming that each bot has at least one outgoing edge, $\widetilde{s}_{1B_{1}}\widetilde{s}_{2B_{2}} = \widetilde{s}_{1B_{2}}\widetilde{s}_{2B_{1}}$ in the following three cases:
\begin{itemize}
 \item[] \underline{Case 1}: $\widetilde{s}_{1B_{1}} =\widetilde{s}_{1B_{2}} =0$, $\widetilde{s}_{2B_{1}}\neq 0$ and $\widetilde{s}_{2B_{2}} \neq 0$ i.e., both the bots do not have an outgoing edge to node $1$, but are connected to node $2$. For this case, the matrix equation \eqref{eqn:two_bots_compare_matrix} gives:
 \begin{align}\label{eqn:eta_ratio_degenerate_one}
\frac{(\eta_{B_{1}}-p^{(*)})}{(\eta_{B_{2}}-p^{(*)})} = \frac{-\widetilde{s}_{2B_{2}}}{\widetilde{s}_{2B_{1}}}.  
 \end{align}

\item[]\underline{Case 2}: $\widetilde{s}_{2B_{1}}=\widetilde{s}_{2B_{2}}=0$, $\widetilde{s}_{1B_{1}}\neq 0$ and $\widetilde{s}_{1B_{2}}\neq 0$  i.e, both the bots do not have an outgoing edge to node $2$, but are connected to node $1$. Similar to the previous case, we get the following solution for \eqref{eqn:two_bots_compare_matrix} here:
\begin{align}\label{eqn:eta_ratio_degenerate_two}
\frac{(\eta_{B_{1}}-p^{(*)})}{(\eta_{B_{2}}-p^{(*)})} = \frac{-\widetilde{s}_{1B_{2}}}{\widetilde{s}_{1B_{1}}}.     
\end{align}

\item[]\underline{Case 3}: $\widetilde{s}_{1B_{1}}$, $\widetilde{s}_{1B_{2}}$, $\widetilde{s}_{2B_{1}}$ and $\widetilde{s}_{2B_{2}}$ are all positive. Setting $\lambda = \widetilde{s}_{1B_{1}}/\widetilde{s}_{1B_{2}} = \widetilde{s}_{2B_{1}}/\widetilde{s}_{2B_{2}}$, we obtain $C_{B_{1}}^{(1)} = \lambda C_{B_{2}}^{(1)}$ and $C_{B_{1}}^{(2)} = \lambda C_{B_{2}}^{(2)}$. Therefore, solving \eqref{eqn:two_bots_compare_matrix}, we obtain:
\begin{align}\label{eqn:eta_ratio}
 \frac{(\eta_{B_{1}}-p^{(*)})}{(\eta_{B_{2}}-p^{(*)})} = \frac{-1}{\lambda}.
\end{align}
\end{itemize} 

Note that the right-hand side for \eqref{eqn:eta_ratio_degenerate_one},\eqref{eqn:eta_ratio_degenerate_two} and \eqref{eqn:eta_ratio} are all negative which implies that for the bots to nullify the effect of one another and for the system to achieve $P^{(\mathcal{B},*)}=P^{(*)}$, the strength of one bot should be strictly less than $p^{(*)}$ while that of the other must be strictly greater than $p^{(*)}$.
\end{example} 
We can indeed extend \eqref{eqn:eta_ratio_degenerate_one}, \eqref{eqn:eta_ratio_degenerate_two} and \eqref{eqn:eta_ratio} to obtain non-trivial solution for bot strengths for a $N$-node network $\mathcal{G}_{N}^{(\mathcal{B})}$, where $\mathcal{B}$ is an arbitrary bot set.
\begin{theorem}\label{thm:multi_bots}
Given an interacting network of individuals $\mathcal{G}_{N}^{(\mathcal{B})}$ equipped with interaction matrix~\eqref{eqn:S_bots}, memory sets $\mathcal{M}^{(t)}_{i,j} = \{t-k_{ji}^{(1)},\cdots, t-k_{ji}^{(M)}\}$ for all $i,j \in \{1,\cdots,N\}$, and a bot set $\mathcal{B}= \{B_{1},\cdots,B_{K}\}$. For $\eta_{B_{k}}\neq p^{(*)}$ at least one $k \in \{1,\cdots,K\}$, the following holds:
\begin{align}\label{eqn:multi_bot_one}
(P^{(\mathcal{B},*)}-P^{(*)})= \mathbf{0}_{N} \iff \widetilde{S}_{\mathcal{B}}\boldsymbol{\eta}_{p^{(*)}} = \mathbf{0}_{N},
\end{align}
where $\boldsymbol{\eta}_{p^{(*)}} := [\eta_{B_{1}}-p^{(*)},\cdots,\eta_{B_{K}}-p^{(*)}]^{\mathbf{T}}$. 
\end{theorem}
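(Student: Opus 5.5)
The plan is to mirror the proof of Theorem~\ref{thm:lower_bound_one_bot}, now carrying a vector of bot contributions instead of a single scalar. From \eqref{eqn:P_calc_bots}, the belief of each individual $i$ evolves by the delayed recursion \eqref{eqn:ds_N_bots}. Its forcing term is $c_{1,M}\sum_{j=1}^N\widetilde{s}_{ij}+\sum_{k}\widetilde{s}_{iB_k}\eta_{B_k}$. By the argument of Lemma~\ref{lem:spectral_radius}, whose row sums in the first block are now at most $M|c_{2,M}|\sum_j\widetilde{s}_{ij}<1$, the spectral radius of $J_N^{(\mathcal{B},L)}$ is below one. So a unique globally attracting fixed point $P^{(\mathcal{B},*)}$ exists. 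Taking $t\to\infty$ and using $|\mathcal{M}_{ji}^{(t)}|=M$, it satisfies
\begin{align*}
P^{(\mathcal{B},*)}=c_{2,M}M\widetilde{S}_NP^{(\mathcal{B},*)}+c_{1,M}\widetilde{S}_N\mathbf{1}_N+\widetilde{S}_{\mathcal{B}}\boldsymbol{\eta},
\end{align*}
where $\boldsymbol{\eta}=[\eta_{B_1},\cdots,\eta_{B_K}]^{\mathbf{T}}$.

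Next I set $Y:=P^{(\mathcal{B},*)}-P^{(*)}=P^{(\mathcal{B},*)}-p^{(*)}\mathbf{1}_N$ and subtract. Row-stochasticity of \eqref{eqn:S_bots} gives $\widetilde{S}_N\mathbf{1}_N=\mathbf{1}_N-\widetilde{S}_{\mathcal{B}}\mathbf{1}_K$, so
\begin{align*}
(I_{N\times N}-c_{2,M}M\widetilde{S}_N)\,p^{(*)}\mathbf{1}_N=p^{(*)}\mathbf{1}_N-c_{2,M}Mp^{(*)}(\mathbf{1}_N-\widetilde{S}_{\mathcal{B}}\mathbf{1}_K).
\end{align*}
Using $p^{(*)}(1-c_{2,M}M)=c_{1,M}$, the constant terms collapse. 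The $c_{1,M}\mathbf{1}_N$ contributions cancel, and what remains is $-c_{1,M}\widetilde{S}_{\mathcal{B}}\mathbf{1}_K-c_{2,M}Mp^{(*)}\widetilde{S}_{\mathcal{B}}\mathbf{1}_K=-p^{(*)}\widetilde{S}_{\mathcal{B}}\mathbf{1}_K$. This yields the key identity
\begin{align*}
(I_{N\times N}-c_{2,M}M\widetilde{S}_N)\,Y=\widetilde{S}_{\mathcal{B}}\big(\boldsymbol{\eta}-p^{(*)}\mathbf{1}_K\big)=\widetilde{S}_{\mathcal{B}}\boldsymbol{\eta}_{p^{(*)}},
\end{align*}
which is the multi-bot analogue of \eqref{eqn:Y_matrix}. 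I would do this bookkeeping carefully; it is the step most prone to sign or constant errors, and in particular to mishandling $c_{1,M}\sum_j\widetilde{s}_{ij}$ versus $c_{1,M}$. I expect it to be the main obstacle.

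Finally, $I_{N\times N}-c_{2,M}M\widetilde{S}_N$ is invertible. The reason is that $\widetilde{S}_N$ is nonnegative with row sums at most one, so $\rho(c_{2,M}M\widetilde{S}_N)\le M|c_{2,M}|<1$. Hence $Y=(I_{N\times N}-c_{2,M}M\widetilde{S}_N)^{-1}\widetilde{S}_{\mathcal{B}}\boldsymbol{\eta}_{p^{(*)}}$. Injectivity of this inverse gives $Y=\mathbf{0}_N$ if and only if $\widetilde{S}_{\mathcal{B}}\boldsymbol{\eta}_{p^{(*)}}=\mathbf{0}_N$, which is \eqref{eqn:multi_bot_one}. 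The hypothesis that some $\eta_{B_k}\neq p^{(*)}$ plays no role in the equivalence itself. It only marks the nontrivial regime, where $\boldsymbol{\eta}_{p^{(*)}}\neq\mathbf{0}$ lies in $\ker\widetilde{S}_{\mathcal{B}}$; this forces rank deficiency and, since $\widetilde{S}_{\mathcal{B}}\ge0$ and each bot has an outgoing edge, entries of both signs, as anticipated in Corollary~\ref{cor:cancellation}.
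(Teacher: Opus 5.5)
Your proposal is correct and follows the same line as the paper. Both arguments reduce the question to $Y=(I_{N\times N}-c_{2,M}M\widetilde{S}_N)^{-1}\widetilde{S}_{\mathcal{B}}\boldsymbol{\eta}_{p^{(*)}}$ and conclude by injectivity of this inverse, with your identity $\widetilde{S}_N\mathbf{1}_N=\mathbf{1}_N-\widetilde{S}_{\mathcal{B}}\mathbf{1}_K$ playing the role of the paper's relation $\sum_{k}\widetilde{s}_{iB_k}=\sum_{j}(s_{ij}-\widetilde{s}_{ij})$.

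The one difference is how $Y$ is defined. The paper takes the limit of trajectory differences against an auxiliary bot-free network with matched sparsity, whereas you compare fixed points directly with $p^{(*)}\mathbf{1}_N$. This is legitimate because $P^{(*)}$ is topology-independent, and it has two benefits: it avoids the auxiliary construction, which cannot exist when some agent has $\sum_k\widetilde{s}_{iB_k}=1$, and it supplies the bookkeeping the paper leaves out.
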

\textit{Proof:}
The proof follows along the same lines as that of Theorem~\ref{thm:lower_bound_one_bot}. Corresponding to $\mathcal{G}_{N}^{(\mathcal{B})}$, we consider a homogeneous network $\mathcal{G}_{N}$ (without bots) with the same memory sets and the following two conditions on its interaction matrix $S=[s_{ij}]_{i,j =1}^{N}$:
\begin{itemize}
\item[(1)] $s_{ij} = 0 \iff \widetilde{s}_{ij}=0$,
\item[(2)] $\widetilde{s}_{ij}\leq s_{ij}$ for all $i,j \in \{1,\cdots,N\}$.
\end{itemize}
We now let $Y_{i}^{(\mathcal{B})}(t):= (P_{i}^{(\mathcal{B})}(t)-P_{i}(t))$, and use \eqref{eqn:P_calc} and \eqref{eqn:P_calc_bots} to write the follows:
\begin{align}\label{eqn:Y_calc_multi_bots}
P_{i}^{(\mathcal{B})}(t) - P_{i}(t) = c_{1,M}(\sum_{j=1}^{N}\widetilde{s}_{ij}-1) + \sum_{k=1}^{K}\widetilde{s}_{iB_{k}}\eta_{B_{k}} + c_{2,M} \sum_{j=1}^{N}\big[\sum_{n \in \mathcal{M}_{i,j}^{(t)}}\widetilde{s}_{ij}(P_{j}^{(\mathcal{B})}(t-n) - P_{j}(t-n))\big].
\end{align}
Similar to Theorem~\ref{thm:lower_bound_one_bot}, we denote :
\begin{align*}
Y_{i}^{(\mathcal{B},*)}:=\lim_{t \to \infty}(P_{i}^{(\mathcal{B})}(t)-P_{i}(t)); \quad Y^{(\mathcal{B},*)} := [Y_{1}^{(\mathcal{B},*)},\cdots,Y_{N}^{(\mathcal{B},*)}]^{\mathbf{T}}
\end{align*}
and take $t\to \infty$ to obtain the follows:
\begin{align}\label{eqn:multi_bots_proof}
Y^{(\mathcal{B},*)} = (I_{N}-c_{2,M}M\widetilde{S}_{N})^{-1}\widetilde{S}_{\mathcal{B}}\boldsymbol{\eta}_{p^{(*)}}.\end{align}
The proof follows from \eqref{eqn:multi_bots_proof}. Furthermore, note that \eqref{eqn:eta_equals_fixedpt} in Theorem~\ref{thm:lower_bound_one_bot} follows from \eqref{eqn:multi_bots_proof} by setting $\mathcal{B}= \{B\}$.
$\null\nobreak\hfill\ensuremath{\square}$

Note that, a non-trivial solution exists for $\widetilde{S}_{\mathcal{B}}\boldsymbol{\eta}_{p^{(*)}} = \mathbf{0}_{N}$ if and only if Rank of $\widetilde{S}_{\mathcal{B}}$ is strictly less than $K$ (number of bots in the network), which can only be achieved when there is linear dependency between row/columns, i.e., bot weights across individuals are proportional. Moreover the condition for cancellation of bot effects in multi-bot case obtained in Theorem~\ref{thm:multi_bots} requires the bias of at least two bots to favor opposing opinions. We formally state this in the Corollary below and further present a Remark concerning intuition behind cancellation of bot effects.

\begin{corollary}\label{cor:cancellation}
Under the assumptions of Theorem~\ref{thm:multi_bots}, suppose $\widetilde{S}_B \eta_{p^{(*)}} = \mathbf{0}_N$ with 
$\eta_{B_k} \neq p^{(*)}$ for at least one $k$. Then there exist indices $k_1, k_2 \in \{1,\ldots,K\}$ 
such that $\eta_{B_{k_1}} < p^{(*)}$ and $\eta_{B_{k_2}} > p^{(*)}$.
\end{corollary}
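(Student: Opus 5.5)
The proof is a sign argument on the entries of $\widetilde{S}_{\mathcal{B}}$, using only nonnegativity of the bot-influence weights and the standing assumption that every bot influences at least one regular individual.

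\emph{Step 1.} Since $\boldsymbol{\eta}_{p^{(*)}}\neq\mathbf{0}$ by hypothesis, there is an index $k_0$ with $\eta_{B_{k_0}}\neq p^{(*)}$. Suppose first that $\eta_{B_{k_0}}>p^{(*)}$; the case $\eta_{B_{k_0}}<p^{(*)}$ is symmetric, since replacing $\boldsymbol{\eta}_{p^{(*)}}$ by $-\boldsymbol{\eta}_{p^{(*)}}$ preserves the kernel condition. With this choice we set $k_2:=k_0$, and it remains to produce $k_1$ with $\eta_{B_{k_1}}<p^{(*)}$.

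\emph{Step 2.} By the assumption of Section~\ref{sec:bots}, bot $B_{k_0}$ influences at least one regular individual. So there exists $i\in\{1,\ldots,N\}$ with $\widetilde{s}_{iB_{k_0}}>0$. Reading off the $i$th row of $\widetilde{S}_{\mathcal{B}}\boldsymbol{\eta}_{p^{(*)}}=\mathbf{0}_N$ gives
\begin{align*}
\sum_{k=1}^{K}\widetilde{s}_{iB_k}\big(\eta_{B_k}-p^{(*)}\big)=0 .
\end{align*}
In this sum the $k_0$ term is strictly positive, being a product of two positive numbers.

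\emph{Step 3.} Because every $\widetilde{s}_{iB_k}\ge 0$, a strictly positive term can only be cancelled by a term that is strictly negative. Such a term must have $\widetilde{s}_{iB_k}>0$ and $\eta_{B_k}-p^{(*)}<0$. Taking $k_1$ to be that index gives $\eta_{B_{k_1}}<p^{(*)}$, which completes the argument. As a byproduct, $k_1$ and $k_2$ both influence the same individual $i$.

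There is no real obstacle here; the only point needing care is to justify that $\widetilde{s}_{iB_{k_0}}>0$ for some $i$. This follows from the connectivity assumption that each bot has an outgoing edge to a regular node. Without that assumption the statement fails, since a bot with an all-zero column in $\widetilde{S}_{\mathcal{B}}$ could take any strength. I would state this dependence explicitly.
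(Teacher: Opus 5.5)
Your proof is correct and uses the same row-wise sign argument as the paper: because the entries $\widetilde{s}_{iB_k}$ are nonnegative, a nonzero term in a row of $\widetilde{S}_{\mathcal{B}}\boldsymbol{\eta}_{p^{(*)}}=\mathbf{0}_N$ must be cancelled by a term of opposite sign. Your version is tighter than the paper's, which only refers loosely to ``any row with at least two nonzero entries'': you invoke the standing assumption that every bot has positive weight on some individual to pick a row with $\widetilde{s}_{iB_{k_0}}>0$, which is exactly the step needed, and as you note the statement fails without that assumption.
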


\textit{Proof.} We can 
write the cancellation condition in \eqref{eqn:multi_bot_one} as $\sum_{k=1}^K \widetilde{s}_{iB_k}(\eta_{B_k} - p^{(*)}) = 0$ for 
each $i \in \{1,\ldots,N\}$. Since $\widetilde{s}_{iB_k} \geq 0$ for all $i\in \{1,\cdots,N\}, k\in\{1,\cdots,K\}$, and at least one 
$\eta_{B_k} \neq p^{(*)}$, the sum $\sum_{k=1}^K \widetilde{s}_{iB_k}(\eta_{B_k} - p^{(*)})$ can equal zero only if the centered strengths $(\eta_{B_k} - p^{(*)})$ 
are not all of the same sign (for any row $i$ with at least two nonzero entries $\widetilde{s}_{iB_k}$). 
Hence at least one bot must have strength strictly above $p^{(*)}$ and at least one strictly below.
$\null\nobreak\hfill\ensuremath{\square}$

\begin{remark}\label{rem:bot_cancellation}
Theorem~\ref{thm:multi_bots} shows that exact cancellation occurs precisely when the centered strength vector belongs to the null space of the bot-influence matrix 
$\widetilde{S}_B$. Thus, a non-trivial cancellation is possible if and only if the bot-influence matrix $\widetilde{S}_B$ fails to have full rank. Equivalently, the rows/columns of $\widetilde{S}_B$ are linearly dependent, indicating that the bots do not possess linearly independent influence profiles across the network. In particular, when the number of bots equals the number of agents and $\widetilde{S}_B$ is invertible, the only solution is the trivial one with $\eta_{B_k}=p^{(*)}$ for every bot. More generally, equation~\eqref{eqn:multi_bots_proof} shows that the equilibrium perturbation is completely determined by $\widetilde{S}_B\eta_{p^{(*)}}$. Thus, exact cancellation is the limiting case in which this vector vanishes, while imperfect cancellation results in a corresponding perturbation of the equilibrium.
\end{remark}

\section{Simulation Results}\label{sec:simulations}
In this section, we provide a set of simulations to illustrate our model and results \footnote{ The numerical results presented in this section are reproducible using the code available in the accompanying GitHub repository {\url{https://github.com/Somya-Shiv-Nadar-University/opinion-dynamics}}}. In this regard, we consider homogeneous ($c_{1,M}$ and $c_{2,M}$ same for all agents, as given by \eqref{eqn:ratio_X}) as well as non-homogeneous (see \eqref{eqn:X_non_homo}) versions of our model. To illustrate the impact of bot on agents, we further present simulations for a network of agent equipped with a bot to demonstrate the model developed in section~\ref{sec:bots}. We consider a directed network of $10$ agents (see Fig. \ref{fig:network}) with edge weights given by a row-stochastic matrix. Unless stated otherwise, this network is equipped with finitely recent and time homogeneous memory sets $\mathcal{M}_{i,j}^{(t,10)}$ for all $i,j \in \{1,\cdots, 10\}$, such that $|\mathcal{M}_{i,j}^{(t)}|=5$ for all $t>10$ (i.e., $M=5$). Here the memory activation time (denoted by $T$ in section~\ref{sec:model}) is $10$ and we have set the memory depth $L=9$.   A plot for average ergodic sum versus time is plotted for each experiment, where   
\begin{align}\label{eqn:ergodic_sum}
I_{t}(i)=\frac{1}{t}\sum\limits_{n=1}^{t}Z_{i,n}
\end{align}
is the ergodic sum for agent $i$. The simulation results are obtained by averaging over $5000$ independent realizations of the process defined by \eqref{eqn:ratio_X}. Although the plots generated from $5000$ independent runs retain some noise, they are sufficient to accurately illustrate the long-term behavior of the process. Consequently, a larger number of runs was not considered. Furthermore, we compute the ergodic sum $I_{t}(i)$ as defined in \eqref{eqn:ergodic_sum} up to time $200$ for each agent $i$ for all the experiments. As stated in remark~\ref{rem:Markov}, the underlying Markovian process is ergodic, and therefore $\eqref{eqn:ergodic_sum}$ tends to the fixed point of \eqref{eqn:P_eqn} for the homogeneous case and $i$th entry of \eqref{eqn:fixed_pt_vector_non_homo} for the non-homogeneous case. This fixed point is the asymptotic belief of an agent.

\begin{figure}
\begin{center}
\includegraphics[scale=0.5]{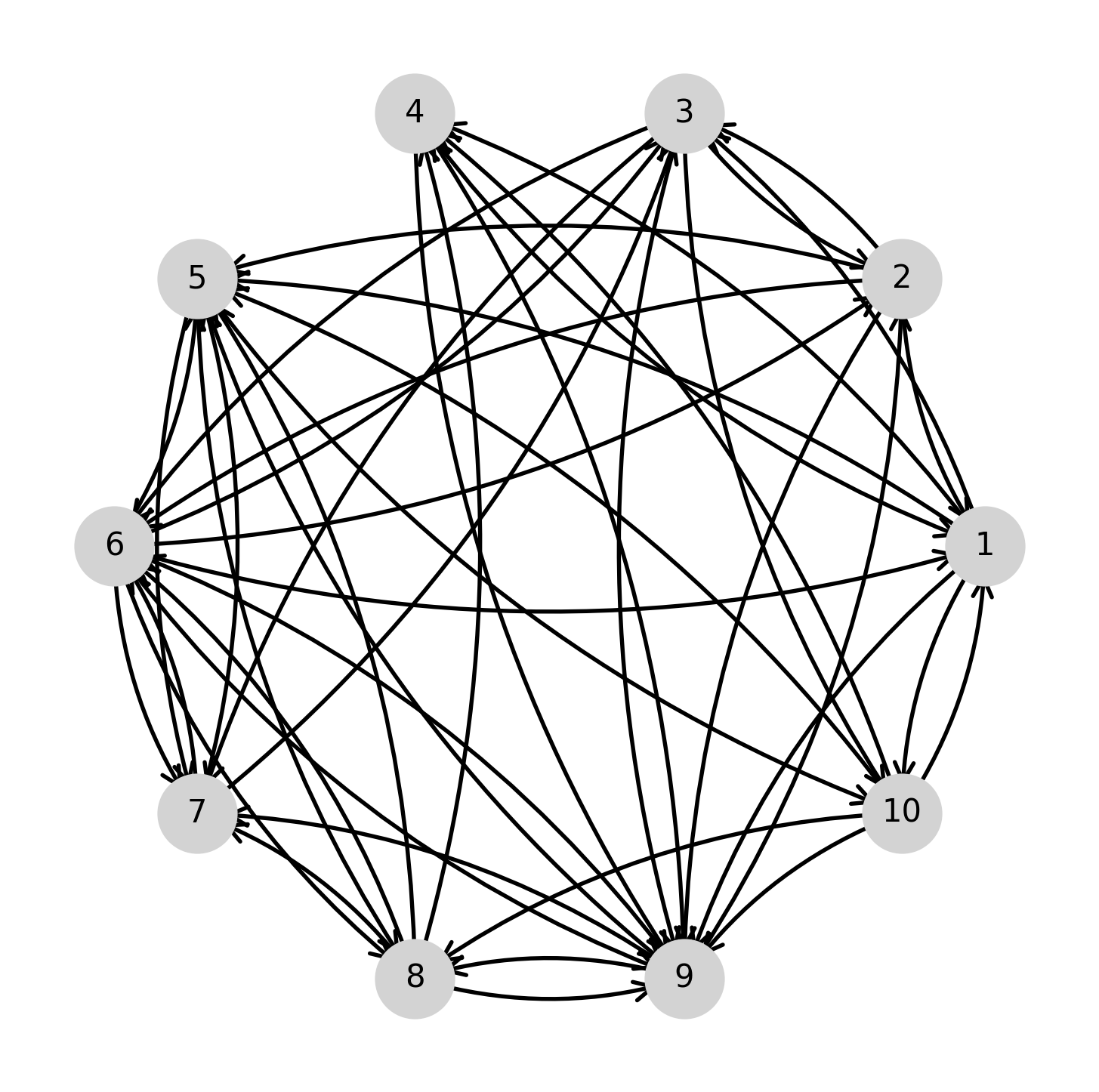}   
\caption{A $10$-node network of agents. The weights on the directed edges are given by a row stochastic matrix as defined in section~\ref{sec:model}. We use this network to generate all our simulation plots.}
\label{fig:network}
\end{center}
\end{figure}

\begin{figure}
 \centering \includegraphics[scale=0.6]{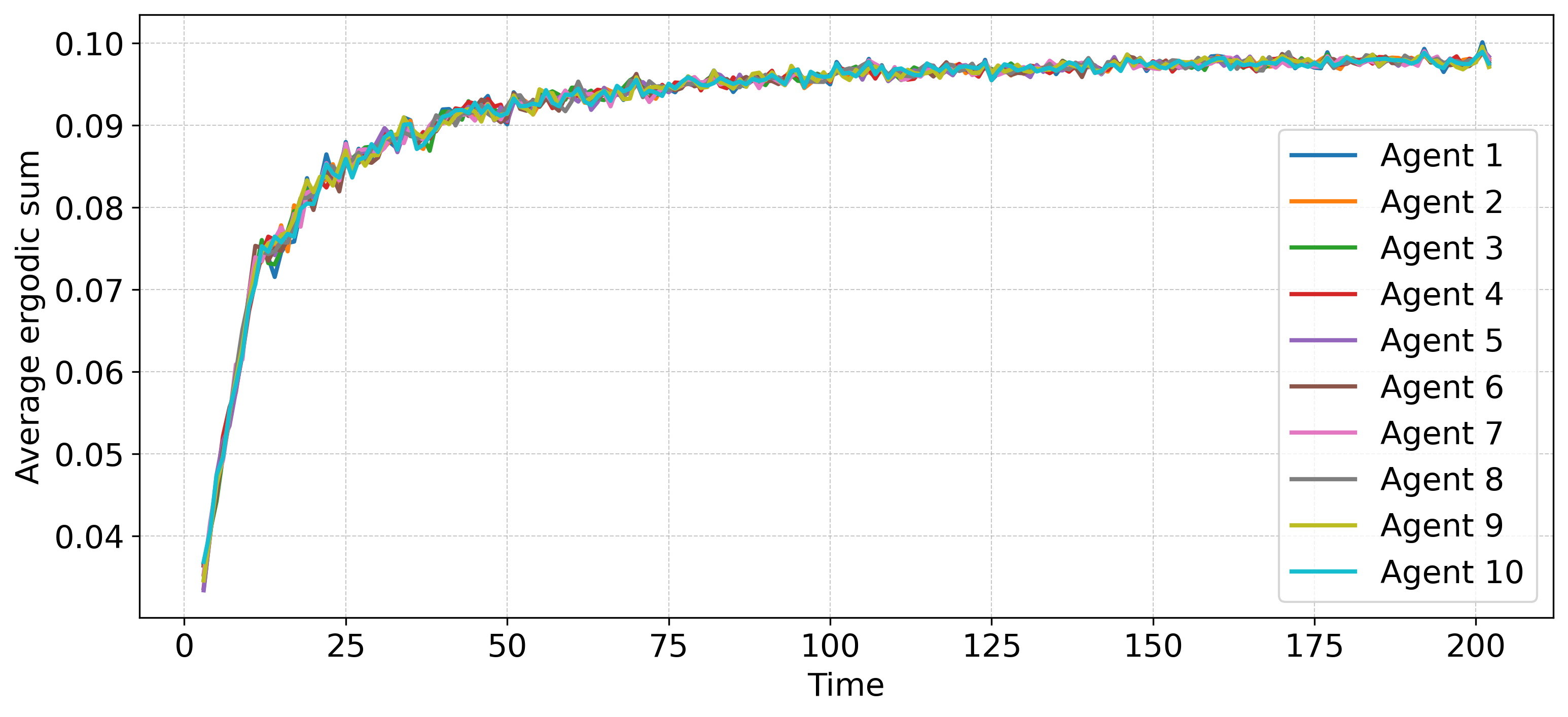}
 \caption{Average ergodic sum versus time plot for a homogeneous network with $10$ agents and $M=5$. The model parameters $c_{1,M} = 0.05$ and $c_{2,M}=0.1$ in \eqref{eqn:ratio_X}. The belief of each agent tends to $0.1$, which is consistent with the fixed point $p^{(*)}$ obtained in \eqref{eqn:homo_fixed_point}}  
 \label{fig:homo}
\end{figure}

In Fig.~\ref{fig:homo}, we plot the average ergodic sum versus time for a homogeneous $10$ - node network. The models parameters $c_{1,M}=0.05$ and $c_{2,M}= 0.1$ in \eqref{eqn:ratio_X} and $M=5$. As concluded in section~\ref{sec:Analysis}, the fixed point for each agent is the same irrespective of the network  structure and edge weights. The observed fixed point for this simulation is $0.1$ which is consistent with the theoretical fixed point obtained in \eqref{eqn:ratio_X}. In Fig.~\ref{fig:non_homo}, we simulate average ergodic sum for non-homogeneous $10$ - node network (i.e., node dependent parameters $c^{(i)}_{1,M}$ and $c^{(i)}_{2,M}$ for $i \in \{1,\cdots,10\}$ in \eqref{eqn:X_non_homo}). As discussed in Remark~\ref{rem:non_homo}, the fixed point for each node in the non-homogeneous case depends on its interactions with other agents. In order to observe the impact of memory on agents, we further simulate the average ergodic sum for this network in Fig.~\ref{fig:memory_comparison}, for $M=2,3$ keeping all the other parameters same. For visual clarity, only the average ergodic sums corresponding to the first three agents are displayed. A crucial observation is that the value of the fixed point for each agent decreases when the memory factor $M$ is decreased. Moreover, the effect of increasing $M$ is qualitatively similar across all three agents. Therefore, although a closed-form expression for the fixed-point vector is not available in the non-homogeneous case, Fig.~\ref{fig:memory_comparison} shows that for each of the three displayed agents, the asymptotic belief 
decreases as $M$ decreases from $5$ to $3$ to $2$, with the ordering preserved across agents. 
This is consistent with the closed-form formula in the homogeneous case~\eqref{eqn:homo_fixed_point}, where 
$p^{(*)} = c_{1,M}/(1 - c_{2,M}M)$ is strictly increasing in $M$ for $c_{2,M} > 0$.

\begin{figure}
\centering
\includegraphics[scale=0.6]{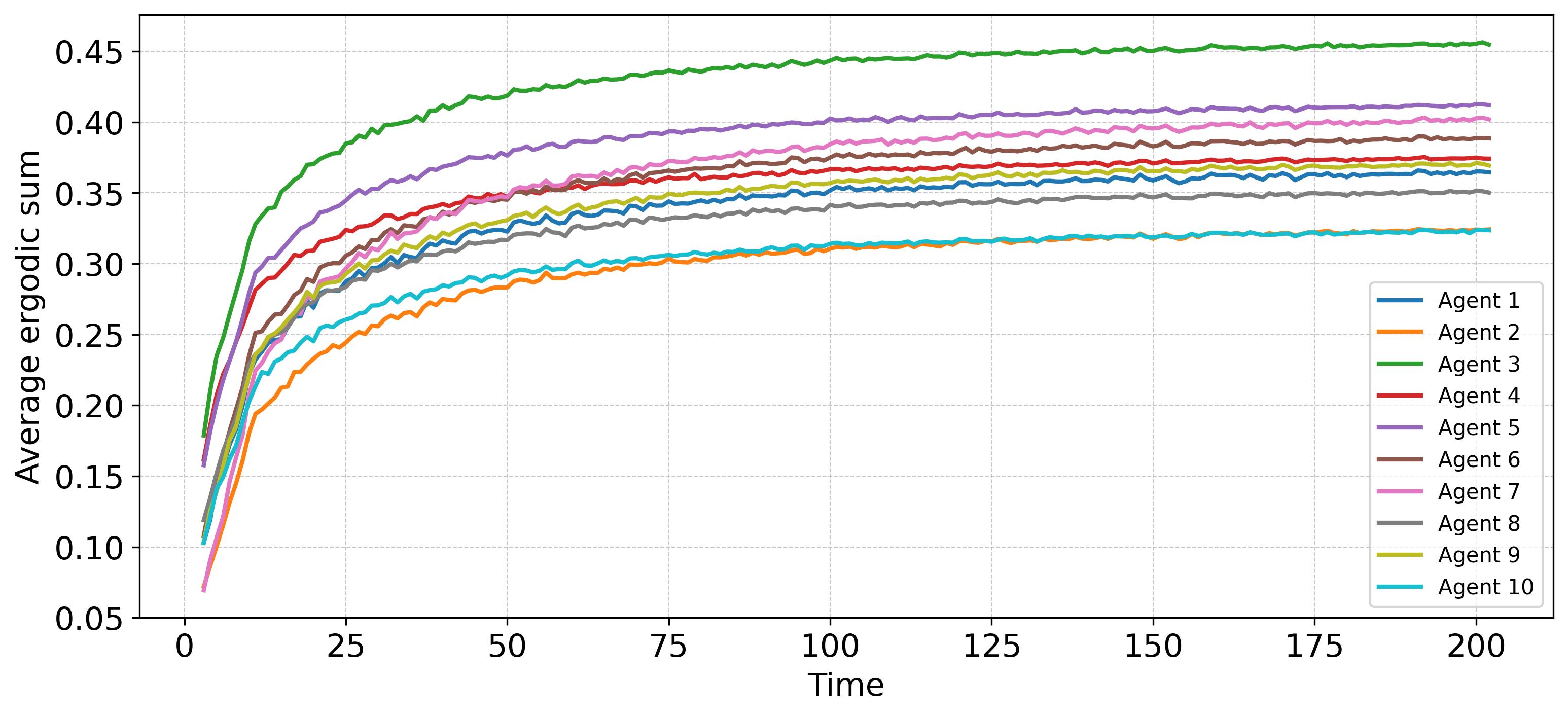}
\caption{Average ergodic sum versus time plot for a non-homogeneous network with $10$ agents with $M=5$. Unlike the homogeneous case, each agent attains a different asymptotic belief, which depends on the interaction parameters.}
\label{fig:non_homo}
\end{figure}

\begin{figure}
\centering\includegraphics[scale=0.6]{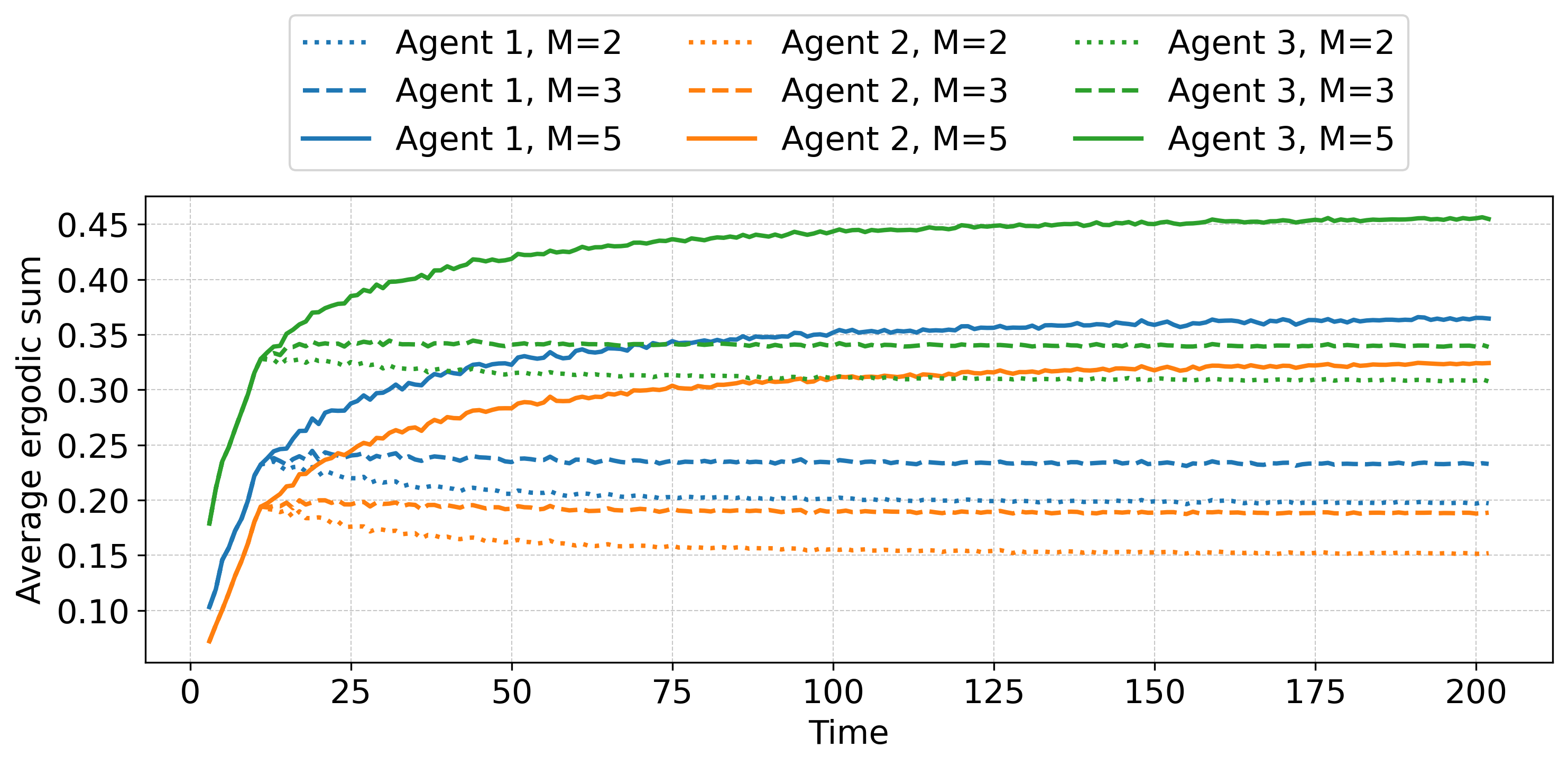}
\caption{In this figure, we plot the average ergodic sum for the first three agents of the non-homogeneous network in Fig.~\ref{fig:network} with different memories. We observe that the value of the fixed point decreases with decreasing memory.}
\label{fig:memory_comparison}
\end{figure}

\begin{table}[h!]
\centering
\resizebox{0.6\textwidth}{!}{\begin{tabular}{|c|c|c|}
\hline
\textbf{Agent} & \textbf{Average Ergodic sum at $\mathbf{t=200}$ in Fig.~\ref{fig:non_homo}} & \textbf{Average Ergodic sum at $\mathbf{t=200}$ in Fig.~\ref{fig:nonhomo_bot}} \\
\hline
1 & 0.36 & 0.42\\
\hline
2 & 0.32& 0.39\\
\hline
3 & 0.45 & 0.49\\
\hline
4 & 0.37 & 0.42\\
\hline
5 & 0.41 & 0.45\\
\hline
6 & 0.39 & 0.44\\
\hline
7 & 0.40 & 0.46\\
\hline
8 & 0.35 & 0.40\\
\hline
9 & 0.37 & 0.42\\
\hline
10 & 0.32 & 0.38\\
\hline
\end{tabular}}

\vspace{0.1cm}

\caption{A comparison of fixed points simulated in Fig.~\ref{fig:non_homo} and Fig.~\ref{fig:nonhomo_bot} for a $10$ - node non-homogeneous network in absence and presence of bot respectively. Since $\eta_{B} =0.5$, which is greater than the fixed points of all the agents in absence of bots, the bot shifts the limiting bias of each agent towards $1$. We observe that this increase in limiting bias is different for different agents, and depends on factors such as initial parameters, interaction weight of the bot and influence of other agents. }
\label{tab:nonhomo_compare}
\end{table}

In the next set of simulations, we add a bot (denoted by $B$) to Fig.~\ref{fig:network} such that it uniformly affects all the agents. In particular,
\begin{align}\label{eqn:S_bots_simulation}
\widetilde{S}^{(\mathcal{B})}_{N} :=\left[ 
\begin{array}{c|c} 
  \begin{array}{c} (1-\beta)S_{N}\end{array} & \beta\mathbf{1}_{N}\\ 
  \hline 
  \mathbf{0}_{1\times N} & 1 
\end{array} \right],
\end{align}
 where $S_{N}$ is the interaction matrix for Fig~\ref{fig:network} and $\beta \in (0,1)$. The average ergodic sum in the presence of a bot is given by:
 \begin{align}\label{eqn:ergodic_sum_bot}
I_{t}^{(B)}(i)=\frac{1}{t}\sum\limits_{n=1}^{t}Z^{(B)}_{i,n},
 \end{align}
where $Z^{(B)}_{i,n}$ are generated using \eqref{eqn:drawing_bots}. In Fig.~\ref{fig:homo_bot_two} and Fig.~\ref{fig:homo_bot}, we generate the ergodic sum \eqref{eqn:ergodic_sum_bot} (averaged over $5000$ runs) for homogeneous Fig.~\ref{fig:network} with bot strengths $\eta_{B}=0.05$ and $\eta_{B}=0.8$ respectively ($c_{1,M}$ and $c_{2,M}$ are same as the experiment in Fig.~\ref{fig:homo}). We set the bot weight $\beta = 0.2$ in both the experiments. Both these figures indicate that a uniform bot effect across all agents in a homogeneous network does not disrupt the consensus. However, as Theorem~\ref{thm:lower_bound_one_bot} predicts, this consensus value obtained is now lower than $p^{(*)}$ in Fig.~\ref{fig:homo_bot_two} because $\eta_{B}< p^{(*)}$. Similarly, since $\eta_{B}>p^{(*)}$ in Fig.~\ref{fig:homo_bot}, the fixed point for all the agents is same but lower than $p^{(*)}$. We also give simulation results for the non-homogeneous network in Fig.~\ref{fig:non_homo} equipped with an added bot with $\beta = 0.2$ and $\eta_{B} = 0.5$. A comparison of agent limiting biases for Fig.~\ref{fig:non_homo} and Fig.~\ref{fig:nonhomo_bot} is presented in TABLE~\ref{tab:nonhomo_compare}. We observe that even though the bot weight is uniform across the network, the bot effect varies across the agents. For instance, the bot has increased the asymptotic bias of agent $7$ more than it has increased it for agent $5$. The former has a higher asymptotic bias than the latter in the presence of bot in Fig.~\ref{fig:nonhomo_bot}, while the trend is opposite in the absence of bot in Fig.~\ref{fig:non_homo}. Since, these experiments are run on a general non-homogeneous network, it is analytically challenging to completely quantify these shifts in agents beliefs when a bot is added. However, as demonstrated via examples in section~\ref{sec:bots}, these shifts in the fixed points can be computed for certain symmetric networks.  
 \begin{figure}
 \centering
\includegraphics[scale=0.6]{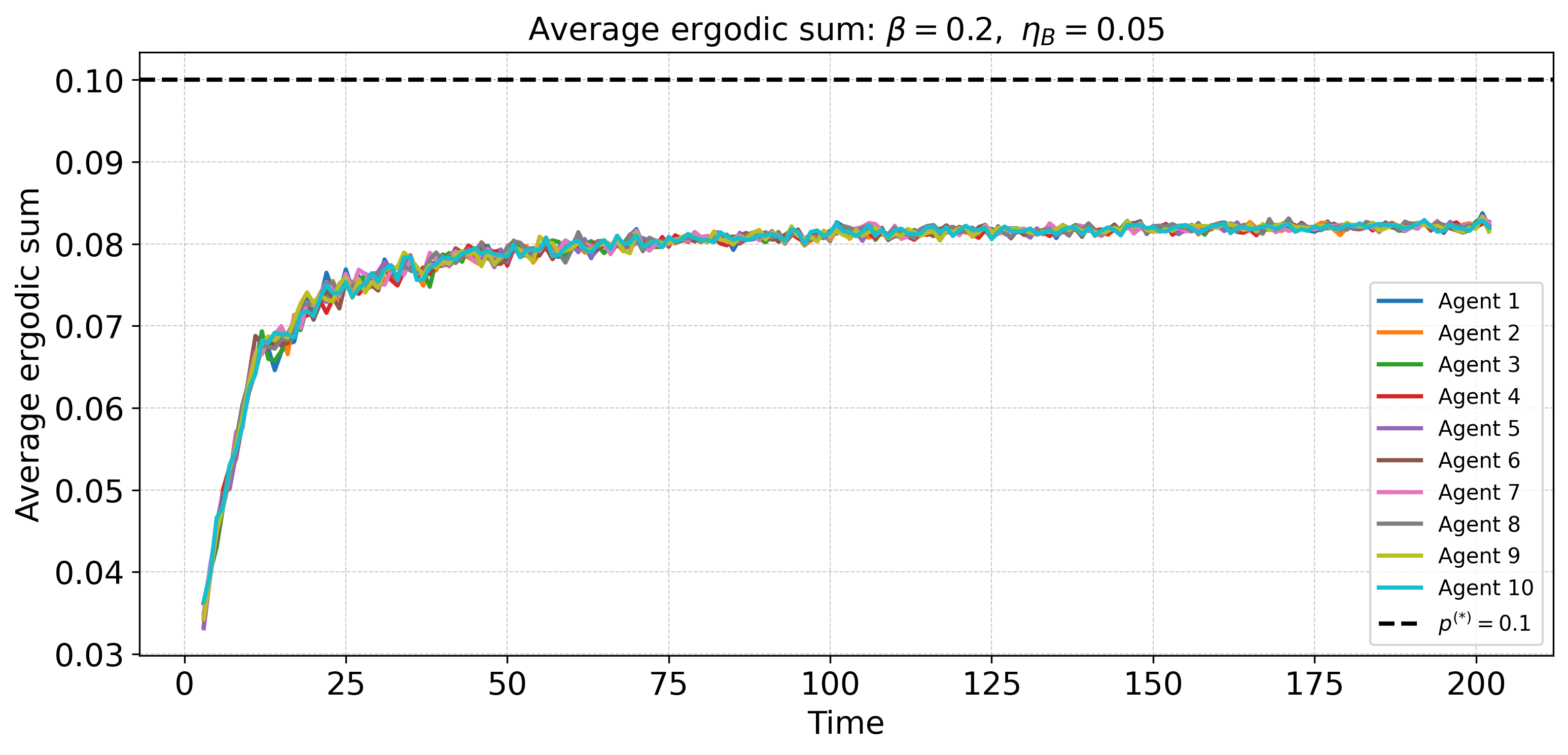}
\caption{Average ergodic sum versus time for the homogeneous network of Fig.~\ref{fig:homo}, augmented with a bot of strength $\eta_B = 0.05$ and uniform bot weight $\beta = 0.2$. The model parameters are $c_{1,M} = 0.05$, $c_{2,M} = 0.1$, $M = 5$, giving $p^{(*)} = c_{1,M}/(1-c_{2,M}M) = 0.1$. Since $\eta_B = 0.05 < p^{(*)} = 0.1$, Theorem~\ref{thm:lower_bound_one_bot} predicts $(P^{(B,*)} - P^{(*)}) \prec \mathbf{0}_N$: the bot shifts consensus downward. The observed consensus value $\approx 0.08$ is consistent with this prediction. Furthermore, the addition of bot does not disrupt the consensus achieved by the agents in a homogeneous network.}
\label{fig:homo_bot_two}
 \end{figure}

 \begin{figure}
 \centering
 \includegraphics[scale=0.6]{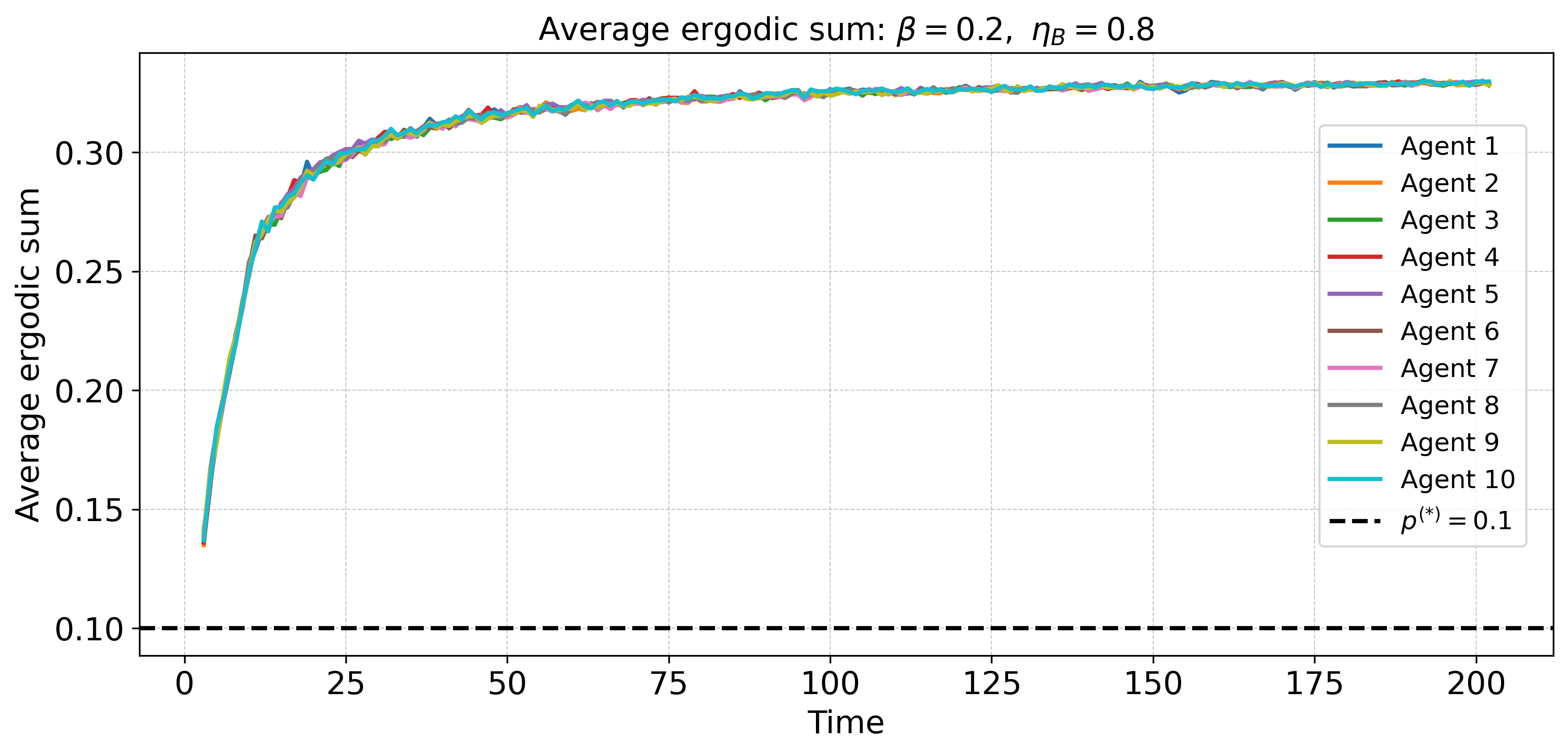}
 \caption{Average ergodic sum versus time for the same homogeneous network and bot-weight configuration as Fig.~\ref{fig:homo_bot_two} ($\beta = 0.2$, $c_{1,M} = 0.05$, $c_{2,M} = 0.1$, $M=5$, $p^{(*)} = 0.1$), but with bot strength $\eta_B = 0.8 > p^{(*)}$. By Theorem~\ref{thm:lower_bound_one_bot}, $(P^{(B,*)} - P^{(*)}) \succ \mathbf{0}_N$: the bot shifts consensus upward. The observed consensus value $\approx 0.33$ confirms this. In both experiments, the bot does not disrupt consensus among agents; it only shifts the common consensus value.}
 \label{fig:homo_bot}
 \end{figure}

\begin{figure}
\centering
\includegraphics[scale=0.6]{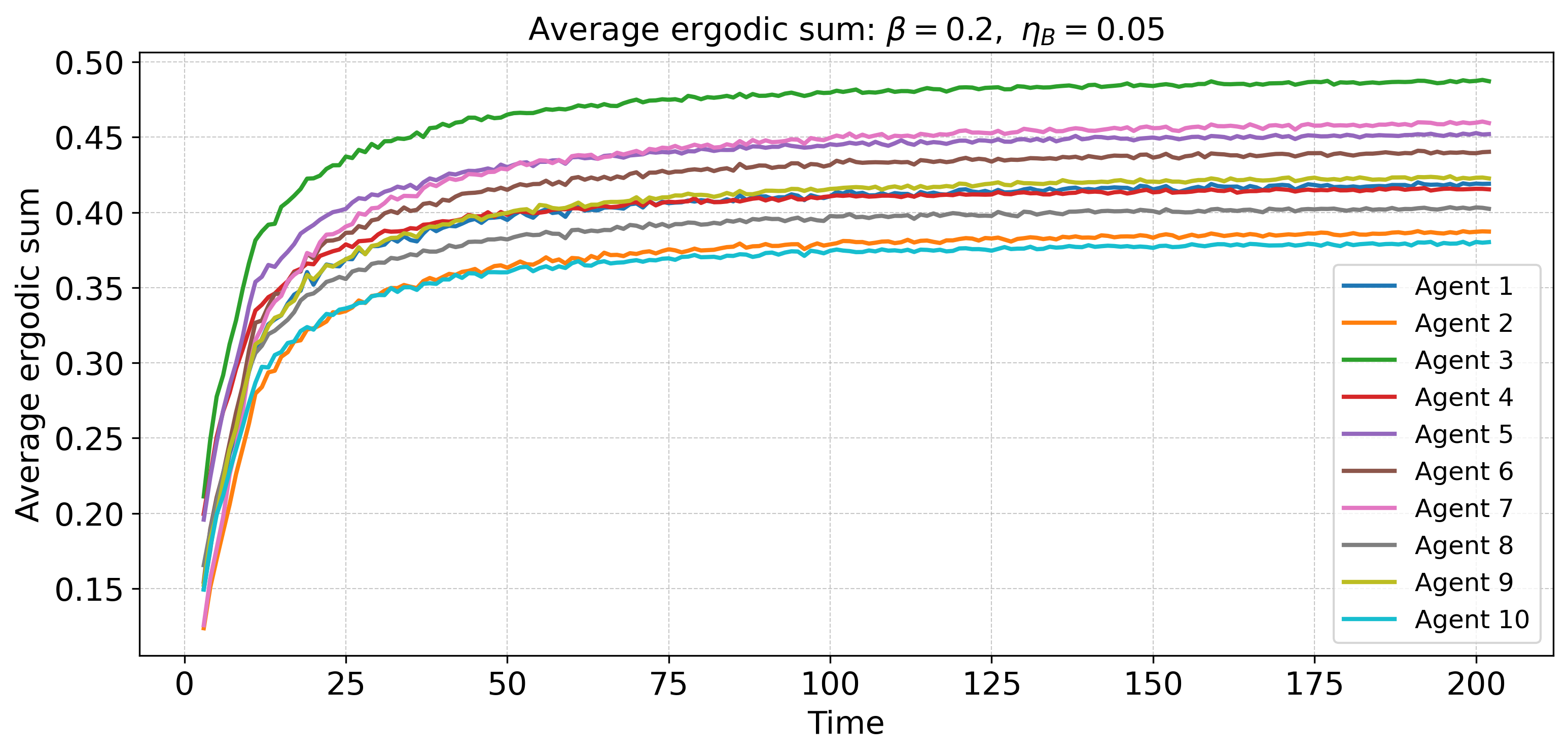}
\caption{In this figure, we present the plot of average ergodic sum versus time for the non-homogeneous network in Fig.~\ref{fig:non_homo} with an added bot with $\beta = 0.2$ and bot strength $\eta_{B} = 0.5$. The bot effect on each node varies and is analytically difficult to obtain in terms of model parameters and interactions.}
\label{fig:nonhomo_bot}
\end{figure}
 
To assess robustness of the simulation findings to network topology, we repeat the experiment of Fig.~\ref{fig:homo_bot_two} (homogeneous network, bot with $\eta_B < p^{(*)}$) 
on the Hub-and-Spoke network analyzed in Examples~\ref{eg:HS_bot_tohub} and~\ref{eg:HS_bot_toall}, with $N = 10$ nodes (one hub, nine spokes) 
and the interaction matrix given by~\eqref{eqn:S_one_symmetric_bot}. We set $c_{1,M} = 0.05$, $c_{2,M} = 0.1$, $M = 5$, and 
bot weight $\beta = 0.2$, matching the parameters of Fig~\ref{fig:homo_bot_two}. The resulting average ergodic sums 
are displayed in Figure~\ref{fig:homo_bot_hub_spoke}. As predicted by the theoretical analysis in Section~\ref{sec:bots} (Example~\ref{eg:HS_bot_tohub}), the hub and spokes converge 
to distinct fixed points $\beta_{H,M}$ and $\beta_{S,M}$ given by~\eqref{eqn:fixed_pt_hub_bot_tohub_M} and~\eqref{eqn:fixed_pt_spoke_bot_tohub} respectively, 
in contrast to the consensus observed in the general network of Fig.~\ref{fig:homo_bot_two}. The qualitative observation 
that a bot with $\eta_B < p^{(*)}$ shifts the hub belief downward (below its bot-free value) is consistent 
across both topologies, supporting the generality of Theorem~\ref{thm:lower_bound_one_bot}. However, the magnitude of the shift 
differs: the hub-and-spoke structure concentrates the bot's influence through the hub, resulting in 
a smaller perturbation to the spokes compared to the uniform-influence setting of Fig~\ref{fig:homo_bot_two}. 
These observations confirm that while the threshold $\eta_B > p^{(*)}$ in Theorem~\ref{thm:lower_bound_one_bot} is topology-independent, 
the \emph{magnitude} of the fixed-point shift depends on the network structure.
 
\begin{figure}
\centering
\includegraphics[scale=0.6]{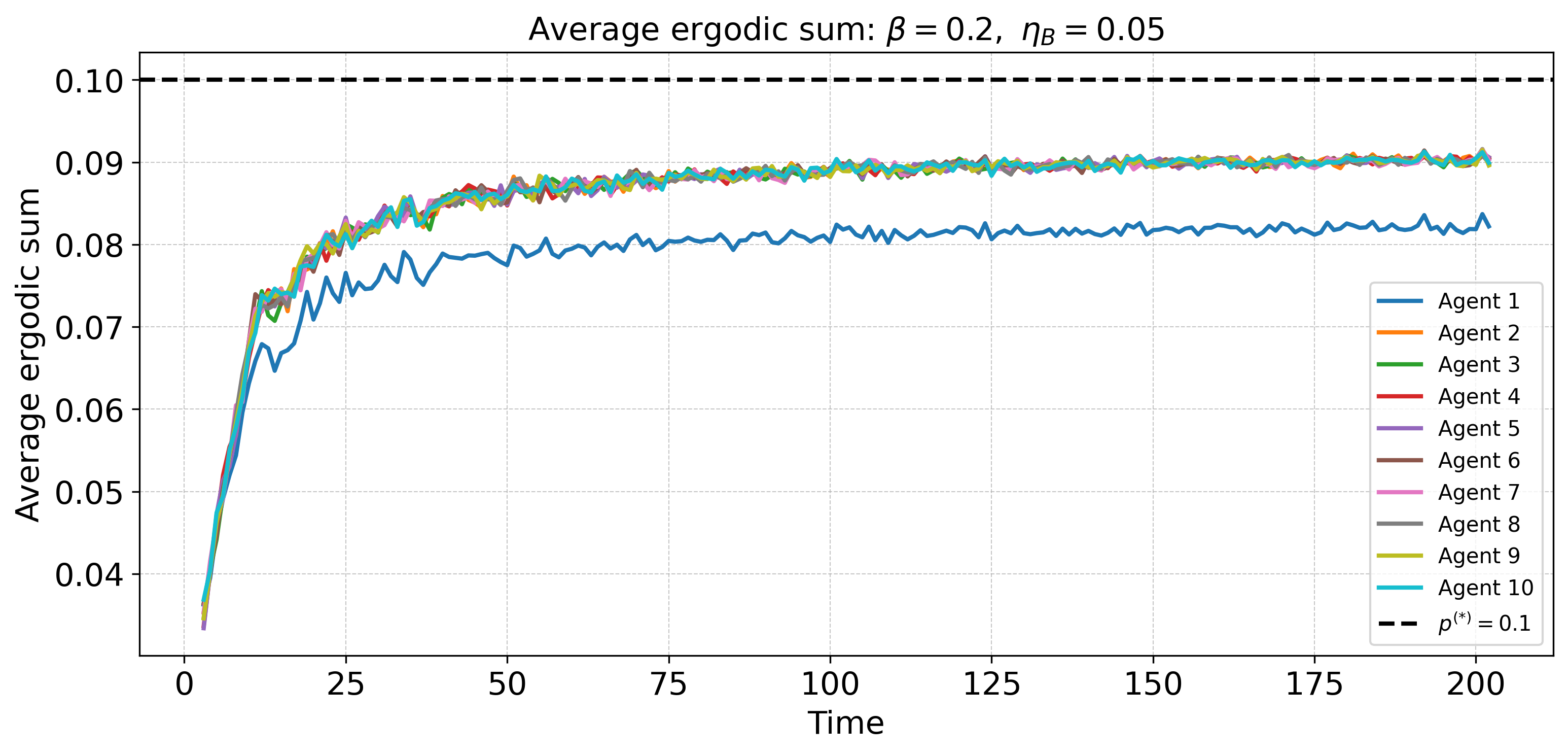}
\caption{Average ergodic sum versus time for a $10$-node hub-and-spoke network equipped with a bot connected only to the hub (as depicted in Fig.~\ref{fig:hub_spoke_bot_tohub}).}
\label{fig:homo_bot_hub_spoke}
\end{figure}

\section{Conclusions and Future Work}\label{sec:conclusions}
In this paper, we proposed an opinion dynamics model over a social network of interacting agents. The novelty of the model lies in the structure provided by memory sets over the agents. We constructed a suitable class of time-delayed dynamical system representing evolution of agent beliefs in the model and studied its stability properties. For the homogeneous case, i.e., when the parameters are same for all the agents, we obtain the fixed point (which is the asymptotic belief of the agents) for this dynamical system. We extended this setup to include bots in the network, and analyzed their effect on the asymptotic belief of the agents. We further presented a detailed discussion of Hub-and-Spoke model equipped with bots. For a homogeneous network equipped with one bot, we established a lower bound on the bot strength required for the bot to shift the agent beliefs towards its bias. In the multiple-bot setting, we obtain conditions on bot weights and bot strengths for various bots to nullify each other. Finally, simulation results are presented to demonstrate the asymptotic behavior of our model, influence of memory and bot effects. Future work includes extending the proposed memory sets to a time-inhomogeneous framework, constructing suitable martingales for the expressed-opinion process to derive fluctuation results and quantifying bot effects on agent beliefs in non-homogeneous networks. A further open direction is the rigorous characterization of fixed-point monotonicity in 
memory~$M$ for non-homogeneous networks. Simulation evidence (Fig.~\ref{fig:memory_comparison}) and analytical 
verification for $N = 2, M = 1$ support Conjecture~\ref{thm:conjecture}, but a general proof likely requires 
new techniques for analyzing the Schur complement structure of $(I_{NL \times NL} - J_N^{(L)})^{-1}$ 
as $L$ and the indicator matrices $\mathbf{M}_N^{(k)}$ vary with~$M$. Several other interesting questions remain open. A quantitative spectral analysis of the delayed system matrix could provide explicit convergence rates and reveal how different memory architectures influence asymptotic behavior. Another natural direction is to study the design of memory structures that optimize consensus speed or robustness against external influence. It would also be interesting to extend the present framework to adaptive or time-varying memory sets, dynamically evolving interaction networks, and strategic or adversarial bots whose behavior changes over time.

\bibliographystyle{IEEEtran}
\bibliography{references_opinion}

\end{spacing}
\end{document}